\documentclass[11pt]{amsart}

\usepackage[T1]{fontenc}
\usepackage[utf8]{inputenc}
\usepackage{lmodern}
\usepackage{amsmath,amssymb,amsthm,mathtools}
\usepackage{enumitem}
\usepackage[hidelinks]{hyperref}
\usepackage[margin=1.15in]{geometry}

\newtheorem{theorem}{Theorem}[section]
\newtheorem{proposition}[theorem]{Proposition}
\newtheorem{lemma}[theorem]{Lemma}
\newtheorem{corollary}[theorem]{Corollary}
\newtheorem{claim}[theorem]{Claim}
\theoremstyle{definition}
\newtheorem{definition}[theorem]{Definition}

\newcommand{\Cay}{\operatorname{Cay}}
\newcommand{\girth}{\operatorname{girth}}
\newcommand{\diam}{\operatorname{diam}}

\title[Obstructions to coarse universality]{Obstructions to coarse universality for finitely generated groups}
\author{Robin Tucker-Drob}
\date{\today}

\begin{document}

\begin{abstract}
We prove that, for every bounded-degree graph $\Lambda$ admitting a finitely cobounded coarse quasi-action by a group, there is a finitely generated group which does not coarsely embed into $\Lambda$.
More generally, for every countable family $(\Lambda_i)$ of such graphs,
there is a finitely generated group that does not coarsely embed into any $\Lambda_i$.
This resolves two conjectures of Simon Thomas: neither a universal Cayley graph nor a universal quasi-isometry class of finitely generated groups exists.
As another consequence, we show that no locally compact second countable group coarsely contains every finitely generated group.

The proof uses an exponential upper bound on the number of finite graphs admitting an $(L,M)$-regular map into $\Lambda$, together with a superexponential supply of high-girth $3$-regular graphs, 
yielding a sequence of finite high-girth obstruction graphs. 
A graphical small-cancellation labeling, using a variation of Osajda's labeling theorem following Esperet and Giocanti, 
then realizes this sequence isometrically inside the Cayley graph of a finitely generated group.
\end{abstract}

\maketitle

\tableofcontents

\section{Introduction}

Simon Thomas conjectured that there is no universal Cayley graph and no universal quasi-isometry class of finitely generated groups \cite{Thomas}.
Here, a graph $\Lambda$ is a \emph{universal Cayley graph} if, for every finitely generated group $K$, there is a finitely generated group $G$ with finite generating set $S$ such that $K$ embeds as a subgroup of $G$ and $\Cay(G,S)\cong \Lambda$ as unlabeled graphs.
Similarly, a \emph{universal quasi-isometry class} is a quasi-isometry class $\mathcal Q$ of finitely generated groups such that every finitely generated group embeds as a subgroup of some member of $\mathcal Q$.
As Thomas observed, a universal Cayley graph would give a universal quasi-isometry class, so the nonexistence of the latter implies the nonexistence of the former.

We prove both of Thomas's conjectures by showing, more generally, that no bounded-degree graph admitting a finitely cobounded coarse quasi-action can coarsely contain every finitely generated group.

We call a coarse quasi-action \emph{finitely cobounded} if the union of finitely many quasi-orbits is coarsely dense (Definition~\ref{def:coarse-quasi-action}). 
For an honest action on a graph, this is the coarse analogue of having finitely many vertex orbits.

\begin{theorem}\label{thm:main-coarse}
Let $(\Lambda_i)_{i=1}^\infty$ be a sequence of bounded-degree graphs, each admitting a finitely cobounded coarse quasi-action by some group.
Then there exists a finitely generated group which does not coarsely embed into any $\Lambda_i$.
\end{theorem}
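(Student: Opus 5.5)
The plan follows the strategy announced in the abstract: a counting argument produces finite obstruction graphs, and small-cancellation theory realizes them inside a single group.

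\textbf{Step 1: an exponential upper bound.} Fix a bounded-degree graph $\Lambda$ of degree at most $D$, with a finitely cobounded coarse quasi-action. Fix control parameters $(L,M)$, meaning $L$-Lipschitz maps whose fibres over balls of radius $1$ have size at most $M$ (a coarse embedding restricted to a bounded-degree graph is of this form). I will show that the number of isomorphism classes of connected $n$-vertex graphs of degree at most $3$ admitting such a map into $\Lambda$ is at most $C^n$, where $C=C(\Lambda,L,M)$.

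The argument runs as follows. Using the quasi-action together with the finitely many quasi-orbits that cover $\Lambda$ coarsely, I translate the image of a base vertex into one of finitely many bounded regions $B\subset\Lambda$. Up to this choice, the map is determined by a spanning-tree exploration of the finite graph $\Gamma$. Each step of the exploration moves the image at most distance $L$, so it has at most $D^{L}$ choices. The non-tree edges of $\Gamma$ join vertices whose images are within distance $L$, and by the fibre bound each vertex has boundedly many candidates for such a partner. Counting the exploration words then gives the bound $C^n$.

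The quasi-action is used only to normalize the base point. Its quasi-constants enlarge $L$ and $M$ by a bounded amount, so the translated map still satisfies a uniform bound.

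\textbf{Step 2: a superexponential supply.} The number of pairwise non-isomorphic $3$-regular graphs on $n$ vertices with girth at least $g$ grows like $n^{cn}$ for each fixed $g$. This follows from the configuration model: a positive proportion of configurations avoid short cycles, and dividing by $n!$ still leaves superexponential growth.

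Enumerate all pairs $(i,k)$, where $\Lambda_i$ is a graph in the sequence and $(L,M)=(k,k)$, as $j=1,2,\dots$. For the $j$-th pair I choose $n_j$ large enough to exceed the $C^{n_j}$ bound for that pair. This yields a finite $3$-regular graph $\Gamma_j$ of girth at least $g_j$ that admits no $(k,k)$-regular map into $\Lambda_i$. I let $g_j\to\infty$ fast, and choose $n_j$ and $g_j$ so that $\diam\Gamma_j\le g_j\cdot\varepsilon$ is not required; only high girth relative to the small-cancellation needs is needed.

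\textbf{Step 3: realization by small cancellation.} I take the disjoint union $\bigsqcup_j\Gamma_j$ of these high-girth, bounded-degree graphs. Using the variant of Osajda's labeling theorem (after Esperet and Giocanti), I label its edges by a finite alphabet so that the $C'(1/6)$ graphical small-cancellation condition holds. The resulting group $G$ is finitely generated, and each $\Gamma_j$ embeds isometrically in $\Cay(G,S)$.

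Now suppose $G$ coarsely embeds into some $\Lambda_i$. The restriction to the $\Gamma_j$ is uniformly coarse, so it is $(k,k)$-regular for some $k$. Taking the index $j$ corresponding to $(i,k)$ contradicts the choice of $\Gamma_j$.

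\textbf{Main obstacle.} I expect the hardest part to be the labeling step. Osajda's theorem requires girth to grow relative to the degree and the diameter growth, and it must be run so that every $\Gamma_j$ embeds isometrically, not merely in a coarse sense. I would try first to pass to sparse subsequences with girth growing very fast, and to check the Esperet--Giocanti hypotheses directly. The counting in Step 1 is the other delicate point: it needs care in how coarse quasi-actions, which are not genuine automorphisms, are used to normalize the base point.
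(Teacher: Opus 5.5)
Your Step 1 and the closing diagonal contradiction are sound. Encoding $\Gamma$ by a spanning-tree exploration, recording with boundedly many choices per vertex the image of each new vertex and the image and fibre index of each non-tree neighbour, is a legitimate substitute for the paper's count of $\Lambda^{\le L_0}$-connected sets $W$ meeting $Z_0$ and of subgraphs of $W[M_0]$.

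The gap is where Steps 2 and 3 meet. The configuration-model argument gives superexponentially many $3$-regular graphs of girth at least $g$ only for each \emph{fixed} $g$ and all $n\geq n_0(g)$, with no control on $n_0(g)$. Nothing in your construction keeps $\log n_j$ bounded by a uniform multiple of $g_j$. Your $n_j$ is pushed up both by $n_0(g_j)$ and by the need to beat $C(\Lambda_i,k)^{n_j}$, and raising $g_j$ to compensate only raises $n_0(g_j)$ again.

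Yet such a bound is forced by any finite-alphabet labeling with the $C'(\lambda)$ property. The $3n_j2^{\gamma-1}$ nonbacktracking directed paths of length $\gamma=\lceil\lambda g_j\rceil$ in $\Gamma_j$ must read pairwise distinct words over $S$. Hence $3n_j2^{\gamma-1}\leq |S|^{\gamma}$, which gives $\log n_j\leq(\lambda g_j+1)\log|S|$. So graphs that are large relative to their girth cannot be labeled at all. Passing to a sparse subsequence cannot help, because the constraint bears on each graph individually. Your remark that the diameter condition is ``not required'' is therefore wrong in substance: some size-versus-girth hypothesis is unavoidable.

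What is missing is a superexponential supply in which the girth grows logarithmically with the number of vertices. Concretely, for a fixed $c\in(0,1)$ you need at least $(as)^{3s/2}$ connected $3$-regular graphs on the vertex set $\{0,\dots,s-1\}$ with girth at least $c\log_2 s$. This is the Linial--Simkin theorem that the paper invokes; it does not follow from the fixed-$g$ Poisson count. With it, $g_j\geq c\log_2 s_j$ gives $|E(X_j)|\leq(2^{1+1/c})^{g_j}$.

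You then need the Esperet--Giocanti form of the labeling theorem, in which this exponential edge bound replaces Osajda's hypothesis $\diam\leq A\cdot\girth$. The Linial--Simkin count does not provide that diameter hypothesis, so the substitution is essential. You also need the bookkeeping condition that $\gamma_j=\lfloor\lambda g_j\rfloor$ be strictly increasing with $\gamma_1>1$.
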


Applied to Cayley graphs, Theorem~\ref{thm:main-coarse} implies Thomas's two conjectures.

\begin{corollary}\label{cor:intro-consequences} There is no universal quasi-isometry class of finitely generated groups, and there is no universal Cayley graph.
\end{corollary}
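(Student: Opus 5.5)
We deduce the corollary from Theorem~\ref{thm:main-coarse}, applied to a single Cayley graph or to a countable list of Cayley graphs.

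First, there is no universal Cayley graph. Suppose $\Lambda$ were one. By definition, every finitely generated group $K$ embeds as a subgroup of some finitely generated group $G$ with $\Cay(G,S)\cong\Lambda$. Two facts about such a $\Lambda$ are needed.
\begin{itemize}
\item[(i)] $\Lambda$ is a connected graph of bounded degree $|S\cup S^{-1}|$.
\item[(ii)] $G$ acts on $\Lambda$ by left multiplication, freely and transitively on vertices, by graph automorphisms. An honest isometric action is a coarse quasi-action, and a single orbit is all of $\Lambda$, so the action is finitely cobounded in the sense of Definition~\ref{def:coarse-quasi-action}.
\end{itemize}
Next, a finitely generated subgroup $K\le G$ coarsely embeds into $\Cay(G,S)$. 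Choose a finite generating set $T$ of $K$. The inclusion $K\hookrightarrow G$ is Lipschitz from the word metric $d_T$ to the word metric $d_S$, with constant $\max_{t\in T}|t|_S$. It is also metrically proper: a $d_S$-ball in $G$ is finite, so it contains only finitely many elements of $K$, and these have bounded $d_T$-length. Together these give the control functions of a coarse embedding. Hence every finitely generated group coarsely embeds into $\Lambda$, contradicting Theorem~\ref{thm:main-coarse} with $\Lambda_i=\Lambda$ for all $i$.

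Second, there is no universal quasi-isometry class. Suppose $\mathcal Q$ were one, and fix $G_0\in\mathcal Q$ with finite generating set $S_0$. For any finitely generated $K$, choose $G\in\mathcal Q$ with $K\le G$. Then:
\begin{itemize}
\item $K$ coarsely embeds into $\Cay(G,S)$, by the argument above;
\item $\Cay(G,S)$ is quasi-isometric to $\Lambda_0=\Cay(G_0,S_0)$, since $G$ and $G_0$ lie in the same class $\mathcal Q$;
\item a composite of a coarse embedding with a quasi-isometry is again a coarse embedding.
\end{itemize}
So every finitely generated group coarsely embeds into $\Lambda_0$, which again contradicts Theorem~\ref{thm:main-coarse}. (Alternatively, one could note that $\mathcal Q$ contains only countably many Cayley graphs up to isomorphism and apply the theorem to that countable list, but the single-graph argument suffices.)

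The only step needing care is checking that a Cayley graph satisfies the hypotheses of Theorem~\ref{thm:main-coarse}, namely that an isometric vertex-transitive action is a finitely cobounded coarse quasi-action. This is immediate once Definition~\ref{def:coarse-quasi-action} is unwound: the quasi-action constants are zero and one quasi-orbit is the whole vertex set. Finally, Thomas's observation that a universal Cayley graph yields a universal quasi-isometry class gives the first claim independently, though the direct argument above already establishes it.
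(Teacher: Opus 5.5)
Your proof is correct and is essentially the paper's argument. You apply Theorem~\ref{thm:main-coarse} to the constant sequence given by a single Cayley graph, with vertex-transitivity supplying the finitely cobounded quasi-action, and then compose the coarse embedding of $K$ into an overgroup with a quasi-isometry. The only differences are that you verify by hand that a subgroup inclusion is a coarse embedding instead of citing Proposition~\ref{prop:closed-image-coarse}, and that you treat the Cayley-graph case directly rather than deducing it from the quasi-isometry statement.

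Drop the parenthetical alternative. The claim that $\mathcal Q$ contains only countably many Cayley graphs up to isomorphism is unjustified, since a quasi-isometry class can contain uncountably many pairwise non-isomorphic finitely generated groups. Your main argument does not use it.
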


The formulation of Theorem \ref{thm:main-coarse} in terms of finitely cobounded coarse quasi-actions also yields an extension to locally compact groups.
To state this extension, we recall that, by Struble's theorem, every locally compact second countable group $G$ admits a proper left-invariant compatible metric \cite{Struble}, and any two such metrics on $G$ are coarsely equivalent \cite{HaagerupPrzybyszewska}.
We regard each such group as a coarse space using any one of these metrics.

\begin{corollary}\label{cor:lcsc}
For every sequence $(G_i)_{i=1}^\infty$ of locally compact second countable groups, there is a finitely generated group which does not coarsely embed into any $G_i$.
In particular, no locally compact second countable group coarsely contains every finitely generated group.    
\end{corollary}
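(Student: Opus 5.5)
We deduce Corollary~\ref{cor:lcsc} from Theorem~\ref{thm:main-coarse}. For each $G_i$ we build a bounded-degree graph $\Lambda_i$ that is coarsely equivalent to $G_i$ (with a proper left-invariant compatible metric $d_i$) and carries a finitely cobounded coarse quasi-action. We then apply Theorem~\ref{thm:main-coarse} to $(\Lambda_i)$. Coarse embeddings compose, and a coarse equivalence has a coarse inverse. So a finitely generated group coarsely embedding into $G_i$ would coarsely embed into $\Lambda_i$, which contradicts the choice of group given by the theorem. The second sentence of the corollary is the special case of a constant sequence.

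\emph{Constructing $\Lambda_i$.} Fix $G=G_i$ with metric $d$. Balls are relatively compact, since $d$ is proper, and $d$ is left-invariant. Choose a maximal $1$-separated subset $X\subseteq G$ (Zorn's lemma). Then $X$ is $1$-dense. Properness plus separation gives a uniform bound on $|X\cap B(g,R)|$ for each $R$: a compact ball $\overline{B(e,R+1)}$ has a bounded number of disjoint $1/2$-balls, by a Haar measure argument or a covering argument, and left-invariance transports this bound to every center. Let $\Lambda$ be the graph on $X$ with edges between points at $d$-distance at most $R_0$, for a suitable $R_0$. Its degree is bounded by the uniform count above.

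\emph{The main obstacle} is that $G$ need not be compactly generated, so $d$ may not be coarsely geodesic and $\Lambda$ may be disconnected or distorted for every fixed $R_0$. I would handle this in one of two ways.

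\begin{enumerate}[label=(\roman*)]
\item If the paper's notions of graph and coarse embedding allow disconnected graphs, with infinite distance between components, use a fixed $R_0$ (say $R_0=3$). The inclusion $X\to G$ is then a coarse equivalence onto the components, in the sense of the paper's definitions.
\item Otherwise, reduce to compactly generated open subgroups. A finitely generated group $K$ with a coarse embedding $f\colon K\to G$ has $f(K)$ within bounded distance of a subgroup $H$ generated by a compact set. To see this, take the compact neighbourhood $C=\overline{B(e,r)}$ of $f(S)$-increments, where $r=\max_{s\in S}\sup_k d(f(k),f(ks))$ is finite by the coarse embedding property. Then $f(K)\subseteq f(e)H$ with $H=\langle C\rangle$ open. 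Let $\{H_{i,n}\}_n$ be the subgroups generated by the balls $\overline{B(e,n)}$ in $G_i$. These are countably many compactly generated lcsc groups, each coarsely equivalent (with the restricted metric) to a word metric. Apply the theorem to the countable family of their Cayley--Abels-type graphs.
\end{enumerate}

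\emph{The quasi-action.} For a compactly generated $H$ with compact generating set $C$, the graph $\Lambda$ on a $1$-separated net $X\subseteq H$ is connected for $R_0$ large, and is quasi-isometric to $(H,d_C)$. The action is defined by $h\cdot x := $ a nearest point of $X$ to $hx$. Left-invariance of $d$ makes this a coarse quasi-action: each map is a uniform quasi-isometry, and composition agrees with multiplication up to bounded error. The quasi-orbit of any single point is $1$-dense, so the action is finitely cobounded. This verifies the hypotheses of Theorem~\ref{thm:main-coarse} for the family $(\Lambda_{i,n})$ and completes the argument.
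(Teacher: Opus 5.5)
Your route (ii), which is the one your final paragraph actually completes, is correct and is essentially the paper's proof. You exhaust each $G_i$ by $H_{i,n}=\langle\overline{B(e,n)}\rangle$, move a coarse embedding of $K$ by a left translation into one $H_{i,n}$, and apply Theorem~\ref{thm:main-coarse} to the countable family of graphs modelling the $H_{i,n}$. There are two differences from the paper.
\begin{itemize}
\item You build those graphs by hand, from a maximal $1$-separated net with the nearest-point action, where the paper cites Salmi. Your construction works. Its connectivity, axiom (i) of the quasi-action, and the quasi-isometry to the word metric all rest on the standard fact that compact subsets of $H_{i,n}$ lie in finite powers of $\overline{B(e,n)}$; the citation hides exactly this.
\item Your increment estimate $f(k)^{-1}f(ks)\in\overline{B(e,r)}$ already gives what Lemma~\ref{lem:compactly-generated-exhaustion}(b) proves. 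It shows that $f(e)^{-1}f$ lands in $H_{i,n}$ for any integer $n\geq r$ and is $1$-Lipschitz for the word metric. Note that $f(K)$ lies in the left coset $f(e)H$, not ``within bounded distance of'' $H$.
\end{itemize}

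You should, however, discard route (i), and with it the opening plan of a single graph $\Lambda_i$ coarsely equivalent to $G_i$. These fail even though the paper does allow disconnected graphs with extended metrics.

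For a concrete failure, take $\mathbb{Z}$ with $d(m,n)=10|m-n|$. This is a proper left-invariant compatible metric, and $X=\mathbb{Z}$ is a maximal $1$-separated set. At scale $R_0=3$ the graph is edgeless, so Theorem~\ref{thm:main-coarse} applied to it says nothing about $G$: $\mathbb{Z}$ coarsely embeds into $(G,d)$ but not into $\Lambda$. More generally, a coarse embedding with $\rho_+(1)>R_0-2$ can send adjacent vertices into different components of the fixed-scale graph.

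More fundamentally, no graph, connected or not, is coarsely equivalent to a non-compactly generated $G$. Suppose $\phi\colon G\to\Lambda$ is a coarse equivalence with coarse inverse $\psi$. Take a path of length at most $\rho(t)$ from $\phi(e)$ to $\phi(g)$, and apply $\psi$ to it. This shows that every $g\in\overline{B(e,t)}$ is $R'$-chain-connected to $e$ for some $R'$ independent of $t$, which forces $G=\langle\overline{B(e,R')}\rangle$.

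So the passage to countably many compactly generated open subgroups is the essential step, not an ``otherwise'' fallback. The proof goes through because Theorem~\ref{thm:main-coarse} accepts countable families.
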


Because every discrete subgroup inclusion is a coarse embedding, Corollary~\ref{cor:lcsc} implies the following topological-group-theoretic fact, resolved here via coarse geometry:  \emph{there does not exist a locally compact second countable group that contains a discrete copy of every finitely generated group}.

The short proofs of the assertions in Corollaries~\ref{cor:intro-consequences} and \ref{cor:lcsc} from Theorem~\ref{thm:main-coarse} are given in Section~\ref{sec:consequences}.
We also note that the existence statements in Theorem~\ref{thm:main-coarse} and Corollary~\ref{cor:lcsc} admit $2^{\aleph_0}$ pairwise non-quasi-isometric witnesses; see Proposition~\ref{prop:free-product-amplification}.

In this article we use \emph{regular maps} (Definition \ref{def:regular}) as the bridge between coarse embeddings and a counting argument for finite graphs.
Regular maps are a large-scale weakening of graph embeddings, appearing for example in \cite{BST,Hume,HMT}.

A bird's-eye view of the proof of Theorem~\ref{thm:main-coarse} is as follows.
First fix one bounded-degree target graph $\Lambda$ admitting a finitely cobounded coarse quasi-action.
Every coarse embedding of a bounded-degree graph gives an $(L,M)$-regular map for suitable constants $L$ and $M$ (Lemma~\ref{lem:coarse-to-regular}).
Using the coarse quasi-action, each $(L,M)$-regular map into $\Lambda$ can be postcomposed so that its image meets a fixed finite subset of $V(\Lambda)$, at the cost of replacing $(L,M)$ by constants depending only on $L,M$ and the coarse quasi-action.
Together with the degree bound on $\Lambda$, this implies that, for fixed constants $L$ and $M$, the number of isomorphism classes of connected graphs on $s$-many vertices admitting an $(L,M)$-regular map into $\Lambda$ is at most exponential in $s$ (Lemma~\ref{lem:exp-bound}).
A superexponential supply of high-girth $3$-regular connected graphs due to Linial and Simkin \cite{LinialSimkin} (Theorem~\ref{thm:LS-high-girth-count}) then gives a finite high-girth obstruction graph for this target $\Lambda$ and these constants $L$ and $M$ (Proposition~\ref{prop:regular-obstruction}).
Diagonalizing over both the target graphs $\Lambda_i$ and the regular-map constants produces a single obstruction sequence for the whole family $(\Lambda_i)$ (Corollary~\ref{cor:choose-diagonal-graphs}).
A graphical small-cancellation labeling (Lemma~\ref{lem:labeling}), obtained via Esperet and Giocanti's proof \cite{EsperetGiocanti} of Osajda's labeling theorem \cite{Osajda}, then realizes this sequence isometrically inside the Cayley graph of a single finitely generated group (Theorem~\ref{thm:graphical-small-cancellation-embedding}).

\medskip

\noindent{\bf Acknowledgments.} I thank Simon Thomas for asking the questions that motivated this work and for pointing out the error in an earlier proposed argument.

This work was partially supported by NSF grant DMS-2246684.

\medskip

\noindent{\bf Disclosure of generative-AI tool use.}
In the course of researching and writing this article, I used the generative AI systems Claude, Gemini, and ChatGPT across several model versions,
the most recent being Claude Opus 4.8, Gemini 3.1 Pro, and GPT-5.5 Pro.
The systems were used for exploratory mathematical conversations, literature searches, and editing suggestions.
Many of the mathematical arguments they proposed were wrong; a few contained correct ideas that are used here in revised form.

At the outset of this project, I was familiar with neither the graphical small-cancellation results of Osajda \cite{Osajda} nor the high-girth graph count of Linial and Simkin \cite{LinialSimkin}.
ChatGPT suggested exploring a combination of Osajda's graphical small-cancellation labelings with a counting obstruction.
In a later conversation, it brought the Linial--Simkin high-girth counting result to my attention.
I developed the arguments presented here from these starting points.
Any errors are my own.

\section{Preliminaries}

Throughout, finitely generated groups are equipped with word metrics. 
Graphs are not assumed connected unless that assumption is stated explicitly. 
We equip graphs with the extended graph metric, so the distance between vertices in distinct components is $\infty$.
All graphs are simple and undirected.
When discussing labelings and graphical small-cancellation presentations in Sections~\ref{sec:labeling-lemma} and \ref{sec:realization}, we formally regard each undirected edge as a pair of oppositely directed edges.

\subsection{Coarse embeddings and regular maps}

\begin{definition}
A map $f:X\to Y$ from a metric space $X$ to an extended metric space $Y$ is a \emph{coarse embedding} if there are nondecreasing functions $\rho_-,\rho_+:[0,\infty)\to[0,\infty)$ with $\rho_-(t)\to\infty$ as $t\to\infty$ and
\[
        \rho_-(d_X(x,x'))
        \leq
        d_Y(f(x),f(x'))
        \leq
        \rho_+(d_X(x,x'))
\]
for all $x,x'\in X$.

We say that $Y$ \emph{coarsely contains} $X$ if there exists a coarse embedding from $X$ into $Y$.
\end{definition}

\begin{definition}\label{def:regular}
Let $X$ and $Y$ be graphs.  
Given constants $L\geq 0$ and $M\geq 1$, a map $f:V(X)\to V(Y)$ is called \emph{$(L,M)$-regular} if
\begin{enumerate}[label=(\roman*)]
\item $f$ is $L$-Lipschitz, i.e., $d_Y(f(x_0),f(x_1))\leq Ld_X(x_0,x_1)$ for all vertices $x_0,x_1\in V(X)$;
\item every fiber has size at most $M$:
\[
        |f^{-1}(y)|\leq M
        \qquad\text{for all }y\in Y.
\]
\end{enumerate}
\end{definition}

\begin{lemma}\label{lem:coarse-to-regular}
Let $X$ be a connected graph of maximum degree at most $\Delta <\infty$, and let $Y$ be a graph.
If $f:V(X)\to V(Y)$ is a coarse embedding, then there are constants $L,M<\infty$ such that $f$ is $(L,M)$-regular.
\end{lemma}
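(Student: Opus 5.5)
The plan is to read off both constants directly from the control functions $\rho_-$ and $\rho_+$ of the coarse embedding, using connectedness of $X$ for the Lipschitz bound and the degree bound $\Delta$ for the fiber bound.

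\emph{Lipschitz constant.} Set $L=\max\{\rho_+(1),0\}$, which is finite since $\rho_+$ takes values in $[0,\infty)$. Let $x_0,x_1\in V(X)$. Because $X$ is connected, $d_X(x_0,x_1)=n<\infty$, and there is a geodesic path $x_0=v_0,v_1,\dots,v_n=x_1$ with consecutive vertices adjacent. Each step satisfies $d_Y(f(v_{j}),f(v_{j+1}))\le \rho_+(1)\le L$. The triangle inequality, which holds in the extended metric on $Y$, then gives $d_Y(f(x_0),f(x_1))\le Ln$. In particular all of $f(V(X))$ lies in a single component of $Y$.

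\emph{Fiber bound.} Since $\rho_-(t)\to\infty$, we can choose $R$ with $\rho_-(t)>0$ for all $t\ge R$; monotonicity of $\rho_-$ is what makes a single threshold work. Suppose $f(x)=f(x')$. Then $\rho_-(d_X(x,x'))\le 0$, which forces $d_X(x,x')<R$. Hence any fiber $f^{-1}(y)$ is contained in the ball of radius $R$ about any one of its points. The degree bound gives
\[
|B_X(x,R)|\le 1+\Delta+\Delta^2+\cdots+\Delta^{\lceil R\rceil},
\]
so we may take $M$ to be this number, which is at least $1$. Note that $x$ and $x'$ lie in the same component, so $d_X$ is finite here.

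Neither step poses a real obstacle. The only subtlety is in the fiber bound: we must use that $\rho_-$ eventually exceeds $0$, not that it is positive everywhere, since it may vanish on an initial interval.
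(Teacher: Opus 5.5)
Your proposal is correct and follows the paper's proof essentially step for step. The Lipschitz constant is $L=\rho_+(1)$, obtained from adjacent vertices. The fiber bound comes from choosing $R$ with $\rho_-(R)>0$, so that each fiber lies in an $R$-ball, whose size is bounded in terms of $\Delta$ and $R$. You only make explicit details the paper leaves implicit, namely the geodesic-path argument and the ball-size bound $1+\Delta+\cdots+\Delta^{\lceil R\rceil}$.
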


\begin{proof}
Let $\rho_-$ and $\rho_+$ be functions witnessing that $f$ is a coarse embedding.
Since $X$ is a graph, and $d_Y(f(x_0),f(x_1))\leq \rho_+(1)$ for adjacent vertices $x_0$ and $x_1$, it follows that $f$ is $L$-Lipschitz for $L=\rho_+(1)$.

Choose $R$ such that $\rho_-(R)>0$.
If $f(x_0)=f(x_1)$, then $\rho_-(d_X(x_0,x_1))\leq 0$, and hence $d_X(x_0,x_1)<R$.
Thus, each fiber of $f$ lies in an $R$-ball of $X$.
Since $X$ has maximum degree at most $\Delta$, there is a uniform bound $M=M(\Delta ,R)$ on the size of such a ball.
\end{proof}

\subsection{Finitely cobounded coarse quasi-actions}

We use the notion of coarse quasi-action from \cite[Definition~2.7]{BeckhardtGoldfarb}, adapted here to extended metric spaces.
Informally, a coarse quasi-action is an action by uniformly controlled coarse self-equivalences, with the action axioms holding up to uniformly bounded error.

\begin{definition}\label{def:coarse-quasi-action}
Let $G$ be a group and $Z$ an extended metric space.  
A \emph{coarse quasi-action} of $G$ on $Z$ is a family
$\sigma=(\sigma_g)_{g\in G}$ of self-maps of $Z$ for which there exist a nondecreasing function
$\omega:[0,\infty]\to[0,\infty]$ with $\omega(\infty)=\infty$ and $\omega(t)<\infty$ for $t<\infty$, and a constant $C<\infty$ such that:
\begin{enumerate}[label=(\roman*)]
\item $d(\sigma_gx,\sigma_gy)\leq \omega(d(x,y))$ for every $g\in G$ and $x,y\in Z$;
\item $d(\sigma_{1_G}x,x)\leq C$ for every $x\in Z$;
\item $d(\sigma_g\sigma_h x,\sigma_{gh}x)\leq C$ for every $g,h\in G$ and $x\in Z$.
\end{enumerate}

A coarse quasi-action is \emph{finitely cobounded} if there are $z_1,\dots,z_r\in Z$ and $R<\infty$ such that, for every $z\in Z$, there are $g\in G$ and $1\leq i\leq r$ with $d(z,\sigma_gz_i)\leq R$.
It is called \emph{cobounded} if one may take $r=1$.
\end{definition}

The axioms imply that
\[
        d(\sigma_{g^{-1}}\sigma_gx,x)\leq 2C
        \qquad\text{and}\qquad
        d(\sigma_g\sigma_{g^{-1}}x,x)\leq 2C
\]
for every $g\in G$ and $x\in Z$.  Thus each $\sigma_g$ is a coarse equivalence with coarse inverse $\sigma_{g^{-1}}$.  

A \emph{quasi-action} is a coarse quasi-action for which $\omega$ may be chosen of the form $\omega(t)=Lt+A$ for some $L\geq 1$ and $A\geq 0$.
In this case, each $\sigma_g$ is a quasi-isometry, with constants independent of $g$, and quasi-inverse $\sigma_{g^{-1}}$.
On a graph, every coarse quasi-action is a quasi-action, so the two notions coincide for the graph targets considered in this article.  
We use the coarse terminology throughout.

\section{Obstruction graphs via regular maps}

\subsection{Counting graphs with regular maps into \texorpdfstring{$\Lambda$}{Lambda}}

We use the following elementary estimate.

\begin{lemma}\label{lem:connected-subsets}
Let $Y$ be a rooted graph of maximum degree at most $\Delta$, where $1\leq \Delta<\infty$.
The number of connected subsets of
$V(Y)$ of size $m$ that contain the root is at most $\Delta^{2m-2}$.
\end{lemma}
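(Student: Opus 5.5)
We will prove the bound by encoding each connected subset containing the root as a walk, and then counting walks. Let $S\subseteq V(Y)$ be connected, with $|S|=m$ and root $o\in S$. Since the induced subgraph $Y[S]$ is connected, it has a spanning tree $T$ with $m$ vertices and $m-1$ edges.

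A depth-first traversal of $T$ starting and ending at $o$ crosses each tree edge exactly twice. It is therefore a closed walk in $Y$ of length exactly $2(m-1)=2m-2$, starting at $o$. The set of vertices this walk visits is precisely $S$. Hence the map sending a walk to the set of vertices it visits, restricted to closed walks of length $2m-2$ from $o$, is a surjection onto the family of connected subsets of size $m$ containing $o$. It does not matter that a given $S$ may arise from many walks, since we only need an upper bound.

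It remains to count walks. A walk of length $k$ from $o$ in $Y$ is determined by choosing, at each step, one of at most $\Delta$ neighbours of the current vertex. So there are at most $\Delta^{k}$ such walks, and with $k=2m-2$ this gives at most $\Delta^{2m-2}$. Counting all walks, not just closed ones, only enlarges the count, so the bound survives.

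The edge case $m=1$ gives the single set $\{o\}$, matching the count $\Delta^0=1$, since the empty walk is the unique walk of length $0$. The hypothesis $\Delta\ge1$ ensures $\Delta^{2m-2}$ makes sense as a bound. There is no real obstacle here. The only point requiring care is that the walk must stay inside $Y$ and have length exactly $2m-2$, which the spanning-tree traversal guarantees. If $Y$ is infinite, the bound is still fine, because bounded degree makes all the counts above finite.
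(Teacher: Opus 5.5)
Your proof is correct and is essentially the paper's argument. You encode each connected set containing the root by a depth-first closed walk of length $2m-2$ around a spanning tree of the induced subgraph, observe that the walk determines its vertex set, and bound the number of such walks from the root by $\Delta^{2m-2}$.
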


\begin{proof}
Let $W\subseteq V(Y)$ be a connected subset of size $m$ containing the root, and fix a spanning tree for the induced subgraph on $W$.
Starting at the root, one can perform a ``depth-first'' walk through the entire spanning tree, ending back at the root, in such a way
that every edge is crossed exactly twice: once while moving away from
the root and once while returning toward the root.  
Such a walk has length exactly $2m-2$ and visits every vertex of $W$.

A walk of length $2m-2$ starting at the root has at most $\Delta$-many choices at each step, so there are at most $\Delta^{2m-2}$ such walks.  
Moreover, the walk determines its set of visited
vertices.  
Hence, the number of connected subsets of $V(Y)$ of size $m$ containing the root is at most $\Delta^{2m-2}$.
\end{proof}

The following exponential upper bound is the main new combinatorial observation of the paper.
It will later be compared with a superexponential supply of high-girth graphs.

\begin{lemma}[Exponential upper bound for graphs admitting $(L,M)$-regular maps into $\Lambda$]\label{lem:exp-bound}
Let $\Lambda$ be a bounded-degree graph admitting a finitely cobounded coarse quasi-action by a group, and let $L,M\geq 1$ be integers.  
There is a constant $A=A(L,M,\Lambda)<\infty$ such that, for every $s\geq 1$, there are at most $A^s$-many isomorphism classes of connected graphs on $s$-many vertices that admit an $(L,M)$-regular map into $\Lambda$.
\end{lemma}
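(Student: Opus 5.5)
The plan is to encode each connected graph $X$ on $s$ vertices admitting an $(L,M)$-regular map $f$ into $\Lambda$ by a bounded amount of data per vertex, after first normalizing $f$ so that its image passes near one of finitely many fixed base points.

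\textbf{Normalization.} Let $\sigma$ be the finitely cobounded coarse quasi-action, with constants $\omega, C$ and data $z_1,\dots,z_r, R$. Since $\Lambda$ is a graph, $\sigma$ is a quasi-action with $\omega(t)\le L_0t+A_0$. Pick a vertex $x_0\in V(X)$. Choose $g$ and $i$ with $d(f(x_0),\sigma_g z_i)\le R$. Replace $f$ by $f'=\sigma_{g^{-1}}\circ f$, possibly composed with a nearest-vertex projection if $\sigma$ does not preserve vertices.

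\begin{itemize}
\item $f'$ is Lipschitz on edges: adjacent vertices have images at distance $\le L$, so $f'$ moves them at most $\omega(L)=:L'$. Connectivity of $X$ then makes $f'$ $L'$-Lipschitz.
\item $f'(x_0)$ lies within distance $\omega(R)+2C$ of $z_i$.
\item Fibers stay bounded. If $f'(x)=f'(y)$, apply $\sigma_g$ to get $d(f(x),f(y))\le \omega(0)+4C=:D$. The set $f^{-1}(B(f(x),D))$ has size at most $M\cdot|B_\Lambda(D)|\le M\Delta^{D+1}=:M'$, where $\Delta$ is the degree bound of $\Lambda$.
\end{itemize}

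So $f'$ is $(L',M')$-regular with image meeting the finite set $F=\bigcup_i B(z_i,\omega(R)+2C)$. The fiber-bound step is the one I expect to need the most care. It uses that $\sigma_{g^{-1}}$ is a uniform coarse equivalence, and this must be checked with the extended-metric conventions. Here $\Lambda$ may be disconnected, but $X$ is connected, so everything stays in one component.

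\textbf{Encoding.} The image $W=f'(V(X))$ is not necessarily connected, so enlarge it. Replace each edge $xy$ of $X$ by a geodesic of length $\le L'$ in $\Lambda$ between $f'(x)$ and $f'(y)$, and let $W'$ be the union of these geodesics. Then $W'$ is a connected subset of size at most $m:=s(1+L'\Delta)$, roughly, since each vertex has degree $\le 3$ in our application. In general the degree of $X$ is bounded anyway, because the neighbours of $x$ map into $B(f'(x),L')$ with fibers $\le M'$, giving degree $\le M'\Delta^{L'+1}$. A cleaner count is $|W'|\le s + s\,\deg\cdot L'$, which is linear in $s$.

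Pad $W'$ to exactly the size $m$, or sum over sizes $\le m$. By Lemma~\ref{lem:connected-subsets}, applied with root ranging over $F$, the number of choices of $W'$ is at most $|F|\,m\,\Delta^{2m}$, which is exponential in $s$.

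Given $W'$, the graph $X$ is recovered up to isomorphism from three pieces of data:
\begin{itemize}
\item the fiber sizes $|f'^{-1}(w)|\in\{0,\dots,M'\}$ for $w\in W'$, at most $(M'+1)^{m}$ choices;
\item after arbitrarily ordering each fiber, for each vertex of $X$ the set of its neighbours. These neighbours lie in the fibers over $B(f'(x),L')$, a set of at most $K:=M'\Delta^{L'+1}$ labelled vertices, so there are at most $2^{K}$ choices per vertex;
\item nothing further, since the vertices are the labelled fiber elements.
\end{itemize}

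The total is at most $|F|\,m\,\Delta^{2m}(M'+1)^m 2^{Ks}\le A^s$ for a suitable $A$ depending only on $L,M,\Lambda$, since $m=O(s)$. This completes the plan, with the normalization and its fiber bound being the main technical point.
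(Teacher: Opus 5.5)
Your proposal is correct and follows the paper's proof. You normalize by $\sigma_{g^{-1}}$ to get a regular map with uniformly controlled constants whose image meets a fixed finite set, then count rooted connected subsets via Lemma~\ref{lem:connected-subsets} and bounded per-vertex data. The differences are cosmetic: the paper makes the image connected by working in the power graph $\Lambda^{\leq L_0}$, which avoids your geodesic thickening and the auxiliary degree bound on $X$, and it counts subgraphs of the lexicographic product $W[M_0]$ rather than fiber sizes and neighbour sets.
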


\begin{proof}
The hypothesis that $\Lambda$ admits a finitely cobounded coarse quasi-action is essential for the proof:
it allows us to postcompose each regular map with one of the quasi-action maps so that its image meets a fixed finite set.
We then count the possible images and, for each such image $W$, count all possible subgraphs of the lexicographic product of $\Lambda ^{\leq L_0}|_W$ with the complete graph $K_{M_0}$, for suitably chosen $(L_0,M_0)$.

We may assume $V(\Lambda)\neq\varnothing$.
We first fix some notation and parameters.
Fix a finitely cobounded coarse quasi-action $\sigma=(\sigma_g)_{g\in G}$ of a group $G$ on $\Lambda$, along with $\omega$ and $C<\infty$ as in Definition~\ref{def:coarse-quasi-action}. 
Fix also vertices $z_1,\dots,z_r\in V(\Lambda)$ and $R<\infty$ witnessing finite coboundedness.  
Define
\[
        L_0\coloneqq \max\{1,\lceil\omega(L)\rceil\} ,
        \qquad
        R_0\coloneqq \omega(R)+2C, \qquad \text{and}\qquad Z_0\coloneqq \bigcup_{i=1}^r B_\Lambda(z_i,R_0),
\]
where $B_\Lambda (\cdot , R_0)$ denotes the closed ball of radius $R_0$. 
The set $Z_0$ is finite since $\Lambda$ has bounded degree.

Choose $N\in\mathbb{N}$ such that every ball of radius $2C$ in $\Lambda$
has cardinality at most $N$, and let $M_0\coloneqq MN$.

Let $\Lambda^{\leq L_0}$ be the graph with vertex set $V(\Lambda)$ in which
two distinct vertices are adjacent if their $\Lambda$-distance is at most
$L_0$. 
Because $\Lambda$ has bounded degree, $\Lambda^{\leq L_0}$ also has bounded degree.
Fix an integer $\Delta_0\geq 1$ such that every vertex of $\Lambda^{\leq L_0}$ has degree at
most $\Delta_0$. 

With these parameters fixed, let $X$ be a connected graph on $s$-many vertices admitting an $(L,M)$-regular map into $\Lambda$, say $f:V(X)\to V(\Lambda)$.
Choose $x_0\in V(X)$.  
By finite coboundedness, there exist $g\in G$ and $1\leq i\leq r$ such that $d_\Lambda(f(x_0),\sigma_gz_i)\leq R$.
Define $h:V(X)\to V(\Lambda)$ by $h=\sigma_{g^{-1}}\circ f$.  
\begin{claim}\label{claim:L0M0}
$h$ is an $(L_0,M_0)$-regular map into $\Lambda$, and $h(V(X))$ meets $Z_0$.
In particular $h(V(X))$ is connected in $\Lambda^{\leq L_0}$.
\end{claim}
\begin{proof}[Proof of Claim~\ref{claim:L0M0}:]
Since
\begin{align*}
        d_\Lambda(h(x_0),z_i)
        &\leq d_\Lambda(\sigma_{g^{-1}}f(x_0),\sigma_{g^{-1}}\sigma_gz_i)
        +d_\Lambda(\sigma_{g^{-1}}\sigma_gz_i,\sigma_{1_G}z_i)+d_\Lambda(\sigma_{1_G}z_i,z_i)\\
        &\leq \omega (R) + 2C = R_0,
\end{align*}
the set $h(V(X))$ meets $Z_0$.

For fixed $w\in V(\Lambda)$, if $h(x)=w$ then
\[
        d_\Lambda(f(x),\sigma_gw)
        =d_\Lambda(f(x),\sigma_g\sigma_{g^{-1}}f(x))
        \leq 2C.
\]
Thus $h^{-1}(w)\subseteq f^{-1}\bigl(B_\Lambda(\sigma_gw,2C)\bigr)$.
Since every fiber of $f$ has cardinality at most $M$ and every ball of radius $2C$ has cardinality at most $N$, every fiber of $h$ has cardinality
at most $MN=M_0$.

If $x$ and $y$ are adjacent in $X$, then
\[
        d_\Lambda(h(x),h(y))
        \leq \omega(d_\Lambda(f(x),f(y)))
        \leq \omega(L)
        \leq L_0.
\]
Since $X$ is a graph, this implies that $h$ is $L_0$-Lipschitz.
Together with the preceding fiber bound, it follows that $h$ is an $(L_0,M_0)$-regular map into $\Lambda$.  Since $X$ is connected, $h(V(X))$ is connected in $\Lambda^{\leq L_0}$.\qedhere[Claim~\ref{claim:L0M0}]
\end{proof}

For a $\Lambda^{\leq L_0}$-connected subset $W\subseteq V(\Lambda)$, let $W[M_0]$ be the lexicographic product of the subgraph of $\Lambda^{\leq L_0}$ induced by $W$ with the complete graph on $\{0,\dots,M_0-1\}$: that is, replace each vertex $w\in W$ by a clique on $\{w\}\times\{0,\dots,M_0-1\}$, and whenever $v,w\in W$ are adjacent in the induced subgraph $\Lambda^{\leq L_0}|_W$, join the two corresponding cliques by a complete bipartite graph.    

Taking $W=h(V(X))$, since every fiber of $h$ has cardinality at most $M_0$, the graph $X$ is isomorphic to a (not necessarily induced) subgraph of $W[M_0]$.
Indeed, we may choose a map $V(X)\to \{ 0,\dots , M_0-1 \}$, $x\mapsto i_x$, that is injective on $h^{-1}(w)$ for each $w\in W$, and then realize $X$ as a subgraph of $W[M_0]$ via the map $x\mapsto(h(x),i_x)$.

So an upper bound on the total number of isomorphism classes of connected graphs on $s$-many vertices which admit an $(L,M)$-regular map into $\Lambda$ is given by 
\begin{equation}\label{eqn:subgraphsum}
\sum _W (\text{number of subgraphs of }W[M_0]),
\end{equation}
with $W$ ranging over all $\Lambda^{\leq L_0}$-connected subsets of $V(\Lambda)$ of cardinality at most $s$ which meet $Z_0$.

Given such a $W$ of cardinality $m$, we can bound the number of edges of $W[M_0]$ via
\[
        |E(W[M_0])|
        \leq
        m\binom{M_0}{2}+\frac{\Delta_0m}{2}M_0^2
        \leq C_0m
\]
for a constant $C_0=C_0(L,M,\Lambda)$.  Hence the total number of subgraphs of $W[M_0]$ is at most
\[
        2^{|V(W[M_0])|+|E(W[M_0])|}
        \leq 2^{mM_0+C_0m}
        \leq C_1^m
\]
for some constant $C_1=C_1(L,M,\Lambda)\geq 2$.

By Lemma~\ref{lem:connected-subsets}, for each fixed $m\leq s$ and each fixed $z\in Z_0$, there are at most $(\Delta_0^2)^m$-many $\Lambda^{\leq L_0}$-connected subsets of cardinality $m$ containing $z$.  
Thus, those $W$'s with $|W|=m$ contribute at most 
\[
        |Z_0|(\Delta_0^2C_1)^m
\]
to the sum \eqref{eqn:subgraphsum}.
Summing over $1\leq m\leq s$, the total number of isomorphism classes is at most
\[
        \sum_{m=1}^s |Z_0|(\Delta_0^2C_1)^m
        \leq
        |Z_0|\frac{\Delta_0^2C_1}{\Delta_0^2C_1-1}(\Delta_0^2C_1)^s
        \leq A^s
\]
for a sufficiently large constant $A=A(L,M,\Lambda)$.
\end{proof}

\subsection{Superexponentially many high-girth graphs}
Our later small-cancellation realization requires a sequence of high-girth obstruction graphs.
Such a sequence will be chosen from the following superexponential supply of high-girth graphs.

\begin{theorem}[Linial--Simkin {\cite[Theorem~1.2]{LinialSimkin}}]\label{thm:LS-high-girth-count}
Fix an integer $k\geq 3$ and a real number $c\in(0,1)$.  
There are constants $a=a_{k,c}>0$ and $s_{k,c}\in\mathbb{N}$ such that, for every even $s\geq s_{k,c}$, there are at least $(a s)^{ks/2}$-many Hamiltonian, and hence connected, simple $k$-regular graphs on the vertex set $\{0,\dots,s-1\}$ with girth at least $c\log_{k-1}s$.
\end{theorem}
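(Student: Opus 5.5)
This is the cited result of Linial and Simkin, and within the paper it is simply imported. If I had to prove it myself, I would count the outputs of a constrained random greedy process. The aim is to show that the number of \emph{ordered} edge sequences producing a $k$-regular graph of girth at least $g\coloneqq c\log_{k-1}s$ is at least roughly $\bigl((1-o(1))s\bigr)^{ks/2}$ times an exponential factor. Dividing by the at most $(ks/2)!\le (ks/2)^{ks/2}$ orderings of each graph's edges then leaves $(as)^{ks/2}$ distinct labeled graphs.

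To obtain Hamiltonicity for free, I would first fix a Hamiltonian cycle. Choose a cyclic ordering of $\{0,\dots,s-1\}$; there are $(s-1)!/2$ of these, which is itself superexponential, but I need the full $s^{ks/2}$ rate. Then add the remaining $(k-2)s/2$ edges greedily. At each step, pick a vertex $u$ of current degree $<k$ and join it to some $v$ of degree $<k$ with $d(u,v)\ge g-1$ in the current graph. This constraint guarantees that no cycle of length $<g$ is ever created. The forbidden set for $v$ is contained in a ball of radius $g$ around $u$, of size at most $k(k-1)^{g-1}=O(s^{c})=o(s)$. So as long as $\Omega(s)$ vertices remain unsaturated, there are $(1-o(1))$ times that many choices for $v$. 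Multiplying these counts over the steps, the product is $\prod_j(\#\text{unsaturated at step } j)\cdot(1-o(1))$. Since the number of unsaturated half-edges decreases linearly, this product is comparable to $(ks/2)!$ up to $e^{O(s)}$ factors. After dividing by the orderings, this gives the $(as)^{ks/2}$ bound, provided the process can be run to completion.

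The main obstacle is the endgame. When only $o(s)$, or even $O(s^{c})$, unsaturated vertices remain, the girth constraint may block every pair and the greedy process can get stuck. My first approach would be to run the greedy process only until about $\varepsilon s$ half-edges remain, and accept the loss in the count, which only affects the constant $a$. I would then complete the graph by a switching argument. For each blocked pair, take an existing edge $xy$ far from both $u$ and $v$, with distance $\ge g$, and replace it by $ux$ and $vy$. This is possible because balls of radius $g$ occupy only $O(s^c)$ vertices while the graph has $s$ vertices. To control the multiplicity that this completion introduces, I would argue that each final graph arises from at most $e^{O(s)}$ partial configurations. That suffices, since only exponential losses are tolerable against the $s^{ks/2}$ main term. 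Alternatively, I would follow Linial--Simkin more closely and use a randomized analysis showing that the degree distribution of the unsaturated vertices stays spread out, so that the process completes with positive probability. Either way, the delicate part is the endgame analysis; the counting in the bulk is routine.
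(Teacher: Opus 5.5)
Your count loses exactly the superexponential factor the statement asks for, in two places.

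First, the normalization. In your second paragraph $u$ is fixed by the process and only $v$ is counted, so $\frac{(s-1)!}{2}\prod_j U_j$ (with $U_j$ the number of unsaturated vertices) is already of order $(s/e)^{ks/2}$. Dividing it by all $(ks/2)!$ edge orderings leaves only $e^{O(s)}$. The same arithmetic fails in your first paragraph: $\bigl((1-o(1))s\bigr)^{ks/2}e^{O(s)}/(ks/2)^{ks/2}$ is merely exponential. Dividing by $(ks/2)!$ is appropriate only when both endpoints are chosen freely, with about $s^2$ choices per edge. Instead, choose $u$ by a fixed rule (say, the least unsaturated vertex). Then a given Hamilton cycle and set of added edges arise from at most $((k-2)!)^s$ sequences, and a $k$-regular graph contains at most $k^s$ Hamilton cycles, so only a factor $e^{O(s)}$ is lost.

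Second, and more seriously, stopping when $\varepsilon s$ half-edges remain does not ``only affect the constant $a$.'' There are at most $s^s$ Hamilton cycles and at most $s$ choices of $v$ in each of the $(k-2-\varepsilon)s/2$ steps performed. So there are at most $s^{(k-\varepsilon)s/2}$ stopped configurations, and dividing by any multiplicity bound cannot give more than that. This falls short of $(as)^{ks/2}$ by a superexponential factor. (The weaker bound would still suffice for Proposition~\ref{prop:regular-obstruction}, since $(3-\varepsilon)/2>1$, but it is not the stated theorem.) Counting the $\Theta(s)$ choices of $xy$ in each of the $\varepsilon s/2$ switchings could in principle restore the factor. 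But then you must bound how many switching histories yield the same graph, and the obvious bound, from pairing up the $\varepsilon s$ new edges, is again superexponential.

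Your own ball estimate removes the need to stop early. The step from $u$ can fail only if every other unsaturated vertex lies in the ball of radius $g-2$ about $u$, which has $b=O(s^c)$ vertices. So run the process while $U_j\ge 2b$. Then:
\begin{enumerate}[label=(\roman*)]
\item every performed step has at least $U_j/2$ choices;
\item the skipped steps cost at most $b^{O(b)}=e^{o(s)}$ relative to the full product;
\item the leftover deficit is $O(s^c)$, so only $O(s^c)$ switchings are needed (allow $u=v$, replacing $xy$ by $ux,uy$, if the leftover deficit sits at one vertex);
\item undoing $O(s^c)$ switchings is possible in at most $s^{O(s^c)}=e^{o(s)}$ ways, so the multiplicity is harmless.
\end{enumerate}
You must also take $xy$ among the added edges rather than the edges of the initial Hamilton cycle, or you can destroy the Hamiltonicity the statement asserts. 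There are still $\Theta(s)$ such edges far from $u$ and $v$.

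With these repairs your argument is a valid and more elementary alternative to the proof the paper cites. Linial and Simkin join uniformly random pairs of minimum-degree vertices at distance at least $g-1$, and must show that this process w.h.p.\ never gets stuck; that is where their delicate endgame analysis lies. Your switching completion sidesteps it, at the cost of a worse constant $a$, which is irrelevant for the paper's application.
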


The theorem is stated in \cite[Theorem~1.2]{LinialSimkin} without the word ``Hamiltonian,'' but the graphs counted in the proof are produced by a process that starts from a Hamilton cycle and only adds edges; this construction is described in the introduction and analyzed in Section~5 of \cite{LinialSimkin}.
Thus, every graph counted in that argument contains the initial Hamilton cycle.

\begin{proposition}[High-girth obstruction to regular maps]\label{prop:regular-obstruction}
Let $\Lambda$ be a bounded-degree graph admitting a finitely cobounded coarse quasi-action by a group.
For every $c\in(0,1)$ and every pair of integers $L,M\geq 1$, for all sufficiently large even $s$ there exists a connected simple $3$-regular graph $X$ on $s$-many vertices such that
\[
\girth(X)\geq c\log_2 s
\]
and $X$ admits no $(L,M)$-regular map into $\Lambda$.
\end{proposition}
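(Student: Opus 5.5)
The plan is a direct counting comparison between Lemma~\ref{lem:exp-bound} and Theorem~\ref{thm:LS-high-girth-count} with $k=3$. Fix $c\in(0,1)$ and integers $L,M\geq 1$. Let $A=A(L,M,\Lambda)$ be the constant from Lemma~\ref{lem:exp-bound}. For all even $s$, at most $A^s$ isomorphism classes of connected graphs on $s$ vertices admit an $(L,M)$-regular map into $\Lambda$.

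On the other side, Theorem~\ref{thm:LS-high-girth-count} with $k=3$ supplies constants $a=a_{3,c}>0$ and $s_{3,c}$. For every even $s\geq s_{3,c}$ it gives at least $(as)^{3s/2}$ connected simple $3$-regular labeled graphs on $\{0,\dots,s-1\}$ with girth at least $c\log_2 s$, since $k-1=2$. This is a count of labeled graphs, whereas Lemma~\ref{lem:exp-bound} counts isomorphism classes, so the step needing care is converting between the two. Each isomorphism class of graphs on $s$ vertices contains at most $s!\leq s^s$ labeled graphs on the vertex set $\{0,\dots,s-1\}$.

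It follows that the number of labeled graphs on $\{0,\dots,s-1\}$ whose isomorphism class admits an $(L,M)$-regular map into $\Lambda$ is at most $A^s s^s$. Comparing with the supply, it suffices to have
\[
(as)^{3s/2} > A^s s^s,
\qquad\text{equivalently}\qquad
a^{3/2}s^{1/2} > A .
\]
This holds for all sufficiently large $s$. The exponent $3s/2$ beats the $s$ coming from $s!$, which is exactly why $3$-regularity, rather than merely connected graphs, gives enough room.

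For such even $s$, some labeled graph $X$ in the Linial--Simkin family lies outside the set of graphs whose class admits a regular map. Then $X$ is connected, $3$-regular, simple, and has girth at least $c\log_2 s$. Having a regular map is an isomorphism invariant, so $X$ admits no $(L,M)$-regular map into $\Lambda$.

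The only potential obstacle is the labeled-versus-unlabeled bookkeeping. The crude bound $s!\leq s^s$ is sufficient, so I expect no real difficulty.
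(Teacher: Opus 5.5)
Your proposal is correct and is essentially the paper's argument. The paper divides the Linial--Simkin labeled count by $s!\leq s^s$ to get at least $(a^{3/2}s^{1/2})^s$ isomorphism classes and compares this with $A^s$, which is the same inequality you reach by multiplying $A^s$ by $s^s$. One harmless imprecision: your labeled count should be restricted to connected graphs, since Lemma~\ref{lem:exp-bound} only counts connected ones, and this restriction costs nothing because the Linial--Simkin graphs are connected.
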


\begin{proof}
Fix $c\in(0,1)$ and $L,M\geq 1$, and let $A=A(L,M,\Lambda)$ be given by Lemma~\ref{lem:exp-bound}.  Applying Theorem~\ref{thm:LS-high-girth-count} with $k=3$, and writing $a=a_{3,c}$, we see that for all sufficiently large even $s$ there are at least $(a s)^{3s/2}$-many Hamiltonian, and hence connected, simple $3$-regular graphs on the vertex set $\{0,\dots,s-1\}$ with girth at least $c\log_2s$.

Each isomorphism class has at most $s!$-many representatives on $\{0,\dots,s-1\}$, and thus the number of isomorphism classes of such graphs is at least
\[
        \frac{(a s)^{3s/2}}{s!}
        \geq
        \frac{(a s)^{3s/2}}{s^s}
        =
        (a^{3/2}s^{1/2})^s.
\]
By Lemma~\ref{lem:exp-bound}, the number of isomorphism classes of connected graphs on $s$-many vertices admitting an $(L,M)$-regular map into $\Lambda$ is at most $A^s$.  
For sufficiently large $s$, the preceding high-girth lower bound exceeds $A^s$, giving the conclusion.
\end{proof}

\begin{corollary}[Sequence of obstruction graphs]\label{cor:choose-diagonal-graphs}
Let $(\Lambda_i)_{i=1}^\infty$ be a sequence of bounded-degree graphs, each admitting a finitely cobounded coarse quasi-action by a group, and fix $\lambda>0$.
There exists a sequence $(X_j)_{j=1}^\infty$ of finite, connected, simple, $3$-regular graphs such that, writing
$g_j\coloneqq \girth(X_j)$:
\begin{enumerate}[label=(\alph*)]
\item for every $1\leq i\leq j$, the graph $X_j$ admits no $(j,j)$-regular map into $\Lambda_i$;
\item the integers $\gamma_j\coloneqq \lfloor\lambda g_j\rfloor$ satisfy
$1<\gamma_j<\gamma_{j+1}$ for all $j\geq 1$;
\item there is a constant $B<\infty$ independent of $j$ such that $|E(X_j)|\leq B^{g_j}$.
\end{enumerate}
\end{corollary}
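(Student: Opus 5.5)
We choose the graphs $X_j$ inductively in $j$, using Proposition~\ref{prop:regular-obstruction} with a single fixed value of $c$, say $c=1/2$. The idea is to satisfy (a) by an intersection argument over finitely many targets, (b) by taking $s$ large, and (c) by the girth lower bound $g_j\geq c\log_2 s_j$.

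\textbf{Step 1: property (a).} At stage $j$ there are only finitely many targets $\Lambda_1,\dots,\Lambda_j$ to handle. Proposition~\ref{prop:regular-obstruction}, applied with $L=M=j$ to $\Lambda_i$, gives a threshold $S_{i,j}$. For every even $s\geq S_{i,j}$ there is a connected $3$-regular graph on $s$ vertices with girth at least $c\log_2 s$ and no $(j,j)$-regular map into $\Lambda_i$.

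This is not yet enough, since the proposition yields one graph per target and we need a single graph good for all $i\leq j$ at once. So we go back to its counting proof rather than its statement. For fixed $s$, Lemma~\ref{lem:exp-bound} says the graphs admitting a $(j,j)$-regular map into some $\Lambda_i$ with $i\leq j$ number at most
\[
\sum_{i\leq j}A(j,j,\Lambda_i)^s\leq j\,A_j^s
\]
isomorphism classes, where $A_j=\max_{i\leq j}A(j,j,\Lambda_i)$. By Theorem~\ref{thm:LS-high-girth-count}, the high-girth $3$-regular connected graphs on $s$ vertices number at least $(a^{3/2}s^{1/2})^s$ isomorphism classes. For all large even $s$ the second count exceeds $j\,A_j^s$. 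Hence some high-girth graph $X$ on $s$ vertices admits no $(j,j)$-regular map into any $\Lambda_i$ with $i\leq j$. This is the step needing care, because it goes slightly beyond the statement of Proposition~\ref{prop:regular-obstruction}; the intersection argument above handles it.

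\textbf{Step 2: property (b).} For a $3$-regular graph on $s$ vertices we have $g\geq c\log_2 s\to\infty$ as $s\to\infty$. So at stage $j$ we take $s_j$ large enough, even, and beyond the threshold from Step~1, so that
\[
\lambda c\log_2 s_j>\gamma_{j-1}+1 \quad\text{and}\quad \lambda c\log_2 s_j>2.
\]
Then $\gamma_j=\lfloor\lambda g_j\rfloor\geq\lfloor\lambda c\log_2 s_j\rfloor>\gamma_{j-1}$ and $\gamma_j>1$.

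\textbf{Step 3: property (c).} We have $|E(X_j)|=3s_j/2$, and $g_j\geq c\log_2 s_j$ gives $s_j\leq 2^{g_j/c}$. Therefore
\[
|E(X_j)|\leq \tfrac32\, 2^{g_j/c}\leq B^{g_j}
\]
with, for example, $B=\tfrac32\cdot 2^{1/c}$, using $g_j\geq 1$. This constant depends only on $c$, not on $j$, so (c) holds.
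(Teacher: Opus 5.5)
Your proof is correct. Steps~2 and~3 are essentially the paper's argument; the paper merges your two conditions into $\lfloor\lambda c\log_2 s_j\rfloor>\gamma_{j-1}$ with $\gamma_0\coloneqq 1$, which also settles the $j=1$ case, and it takes $B=2^{1+1/c}$.

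The real difference is how you get one graph that works for all of $\Lambda_1,\dots,\Lambda_j$ at once. You reopen the counting proof of Proposition~\ref{prop:regular-obstruction} and use a union bound, $\sum_{i\le j}A(j,j,\Lambda_i)^s\le jA_j^s$. This is still beaten by $(a^{3/2}s^{1/2})^s$, since $j$ is fixed at stage $j$.

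The paper instead keeps the proposition as a black box:
\begin{itemize}
\item it forms the disjoint union $\Lambda^{(j)}=\bigsqcup_{i\le j}\Lambda_i$;
\item it lets $G_1\times\cdots\times G_j$ quasi-act coordinatewise, which is finitely cobounded using the union of the finitely many cobounding sets;
\item it applies Proposition~\ref{prop:regular-obstruction} once, to $\Lambda^{(j)}$;
\item it notes that a $(j,j)$-regular map into any $\Lambda_i$ is also one into $\Lambda^{(j)}$.
\end{itemize}

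The paper's route costs an extended-metric setup, since distinct components sit at distance $\infty$. That is why Definition~\ref{def:coarse-quasi-action} is phrased for extended metric spaces and graphs are not assumed connected. It also requires checking that the product quasi-action satisfies the axioms. Your route avoids both, but it is not a pure citation: you are in effect proving a finitely-many-targets version of Proposition~\ref{prop:regular-obstruction} directly. The quantitative outcome is the same either way.
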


\begin{proof}
For each $i\geq 1$, fix a group $G_i$ with a finitely cobounded coarse quasi-action on $\Lambda_i$.
For each $j\geq 1$, let $\Lambda^{(j)}\coloneqq \bigsqcup_{i=1}^j\Lambda_i$ and $G^{(j)}\coloneqq G_1\times\cdots\times G_j$.
Then $\Lambda^{(j)}$ has bounded degree, and $G^{(j)}$ has a finitely cobounded coarse quasi-action on $\Lambda^{(j)}$: namely, the coarse quasi-action whose restriction to each $\Lambda_i$ factors through the $i$-th coordinate projection $G^{(j)}\to G_i$ to the given coarse quasi-action of $G_i$ on $\Lambda_i$.

Fix $c\in(0,1)$.  We define the sequence $(X_j)_{j=1}^\infty$ recursively. 
Let $\gamma_0=1$ and, for $j\geq 1$, having defined $(X_i)_{1\leq i<j}$, choose an even integer $s_j$ sufficiently large that Proposition~\ref{prop:regular-obstruction} applies to $\Lambda^{(j)}$ with $(L,M)=(j,j)$ and
\[
\lfloor\lambda c\log_2s_j\rfloor>\gamma_{j-1}.
\]
By the choice of $s_j$, we may find a connected simple $3$-regular graph $X_j$ on $s_j$-many vertices with $g_j\coloneqq \girth(X_j)\geq c\log_2s_j$ and admitting no $(j,j)$-regular map into $\Lambda^{(j)}$, and hence none into any of $\Lambda_1,\dots,\Lambda_j$. 
Then $\gamma_j=\lfloor\lambda g_j\rfloor\geq \lfloor\lambda c\log_2s_j\rfloor>\gamma_{j-1}$, proving (a) and (b).

Finally, $g_j\geq c\log_2s_j$ implies
$s_j\leq 2^{g_j/c}$.  Since $g_j\geq 1$ and $X_j$ is $3$-regular,
\[
|E(X_j)|=\frac{3}{2}s_j
\leq 2^{g_j}2^{g_j/c}
=\bigl(2^{1+1/c}\bigr)^{g_j}.
\]
Thus (c) holds with $B=2^{1+1/c}$.
\end{proof}

\section{A labeling lemma after Osajda and Esperet--Giocanti}
\label{sec:labeling-lemma}

Osajda proved in \cite{Osajda} that a sequence $(\Theta_n)_{n=1}^\infty$ of finite connected graphs of uniformly bounded degree, growing girth, and uniformly bounded diameter-to-girth ratio admits a finite-alphabet small-cancellation labeling.
Esperet and Giocanti \cite{EsperetGiocanti} reproved this by a counting argument which accommodates a smaller labeling alphabet.

Although \cite[Theorem 3.1]{EsperetGiocanti} is stated under the same diameter-to-girth hypothesis, the role of that hypothesis in the counting proof is to give a uniform exponential bound on the total number of nonbacktracking paths of length $\gamma_n=\lfloor\lambda\girth(\Theta_n)\rfloor$.
Section~4 of \cite{EsperetGiocanti} explains that an edge bound in terms of girth can be used instead, since such a bound gives the same path-count estimate.
Lemma~\ref{lem:labeling} records the resulting form needed below, replacing the hypothesis $\diam(\Theta_n)\leq A\girth(\Theta_n)$ by the exponential edge bound $|E(\Theta_n)|\leq B^{\girth(\Theta_n)}$.
In our application this bound will be supplied by Corollary~\ref{cor:choose-diagonal-graphs}(c).

\begin{definition}\label{def:small-cancellation-labeling}
Let $\Theta=\bigsqcup_n\Theta_n$ be a disjoint union of finite connected graphs, and let $S$ be a finite alphabet with a fixed-point-free involution $s\mapsto s^{-1}$.  
A \emph{labeling} of $\Theta$ by $S$ assigns a letter of $S$ to each directed edge so that reversing a directed edge inverts its letter.
For a directed path $P=e_1\cdots e_k$ in $\Theta$ the \emph{word read along $P$} is $\ell(P)\coloneqq \ell(e_1)\cdots\ell(e_k)$.  
The labeling is \emph{reduced} if no two distinct directed edges with the same origin vertex have the same label.

We say that a reduced labeling satisfies the \emph{$C'(\lambda)$ small-cancellation property} if, for every $n$, no word of length at least $\lambda\girth(\Theta_n)$ read along a nonbacktracking path in $\Theta_n$ occurs as the word read along any other directed nonbacktracking path in $\Theta$, including another path in the same component.
\end{definition}

\begin{lemma}[Labeling lemma, after Osajda \cite{Osajda} and Esperet--Giocanti \cite{EsperetGiocanti}]\label{lem:labeling}
Fix $\Delta,B<\infty$ with $\Delta\geq 3$ an integer and $B\geq 2$, and fix $0<\lambda\leq 1/6$.
Let $(\Theta_n)_{n=1}^\infty$ be a sequence of finite connected simple graphs of maximum degree at most $\Delta$ such that, letting
$g_n\coloneqq \girth(\Theta_n)$ and $\gamma_n\coloneqq \lfloor \lambda g_n\rfloor$,
\[
        g_n\to\infty,
        \qquad
        |E(\Theta_n)|\leq B^{g_n},
        \qquad
        1<\gamma_n<\gamma_{n+1}\text{ for every }n\geq 1.
\]
Then $\bigsqcup_{n=1}^\infty \Theta_n$ admits a reduced finite-alphabet labeling
satisfying the $C'(\lambda)$ small-cancellation property.
\end{lemma}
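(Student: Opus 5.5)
The plan is to adapt the Esperet--Giocanti probabilistic (or entropy/counting) proof of Osajda's theorem. We label the graphs $\Theta_1,\Theta_2,\dots$ one at a time, with a fixed finite alphabet $S$ whose size $|S|=2k$ is chosen large in terms of $\Delta$, $B$ and $\lambda$. Each $\Theta_n$ is labeled uniformly at random among reduced labelings, conditioned on the labels already fixed on $\Theta_1,\dots,\Theta_{n-1}$. We then show that with positive probability no forbidden coincidence involves $\Theta_n$.

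Reducedness is arranged directly, by proper edge colouring. Orient edges so that a reduced labeling corresponds to a proper colouring of the ``directed'' constraint graph. Concretely, Vizing gives at most $\Delta+1$ colour classes of matchings. Using $k\ge$ a large multiple of $\Delta$ letters, we choose labels so that within each star the labels are distinct. The number of such labelings leaves, at each edge, at least roughly $k-2\Delta$ free choices given the neighbours. This is the standard setup from \cite{EsperetGiocanti}.

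The bad events come in three kinds. The first is two distinct nonbacktracking paths $P,Q$ of length $\gamma_n$ in $\Theta_n$ with $\ell(P)=\ell(Q)$. The second is such a path in $\Theta_n$ reading the same word as a path of length $\gamma_n$ in some earlier $\Theta_m$, $m<n$. Since $\gamma_m<\gamma_n$, it suffices to check that a subword of length $\gamma_m$ of that path does not reappear. So the event reduces to: a path of length $\gamma_m$ in $\Theta_n$ reads a word occurring in $\Theta_m$. The third kind, a path in $\Theta_n$ matching one in a later $\Theta_{n'}$, is handled when $\Theta_{n'}$ is labeled, by the same symmetric argument.

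The number of nonbacktracking paths of length $t$ in $\Theta_n$ is at most $2|E(\Theta_n)|(\Delta-1)^{t-1}\le 2B^{g_n}\Delta^{t}$. Because $\gamma_n\ge \lambda g_n-1$, this is at most $(B^{2/\lambda}\Delta)^{\gamma_n}$ up to constants. This is exactly where the edge bound replaces the diameter bound.

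For a fixed pair of paths, the probability that their words agree is at most $(c/k)^{\Omega(\gamma)}$. This is the main obstacle. When $P$ and $Q$ overlap, or when $Q$ has self-intersections relative to $P$, the labels are not independent. Here we use girth: $\gamma_n\le g_n/6$, so $P\cup Q$ has no cycle within distance shorter than the girth, and $P\cup Q$ is a forest. Revealing edges of $P$ in order, at least a constant fraction (say half) of the $\gamma$ steps hit an edge of $P$ not yet revealed and not forced by reducedness. Each such step matches the corresponding label of $Q$ with probability at most $1/(k-2\Delta)$. Nonbacktracking with distinctness rules out $P=Q$ and $P=Q^{-1}$ only up to shifts, and shifted self-overlaps of a path in a forest are handled by the same forest revealing argument.

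A union bound then gives an expected number of bad pairs of at most
\[
\sum_{t\in\{\gamma_1,\dots,\gamma_n\}} (B^{2/\lambda}\Delta)^{2t}\Bigl(\tfrac{c}{k}\Bigr)^{t/2}.
\]
For the earlier-graph terms, the count of words in $\Theta_m$ of length $\gamma_m$ is bounded similarly. For large $k$ this sum is less than $1$ for all $n$, uniformly, because the $\gamma_m$ are distinct integers greater than $1$. So a good labeling of $\Theta_n$ exists at every stage, and the final labeling satisfies $C'(\lambda)$.
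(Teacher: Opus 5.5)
Your path count $2|E(\Theta_n)|(\Delta-1)^{t-1}$, turned into $p_n\le C^{\gamma_n}$ via $|E(\Theta_n)|\le B^{g_n}$, is exactly the estimate the paper proves. The paper then substitutes it into the Esperet--Giocanti counting proof in place of their diameter-based count of comparison paths. The gap is in your next step, a global union bound over a uniformly random labeling of all of $\Theta_n$. That step is fine for pairs of paths of length $\gamma_n$ inside $\Theta_n$, since there are at most $p_n^2\le C^{2\gamma_n}$ such pairs. It fails for your second kind of event. For fixed $m<n$, the paths $P$ you must control are the nonbacktracking paths of length $\gamma_m$ in $\Theta_n$. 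There are at least $2g_n$ of them (already along a shortest cycle) and up to $2|E(\Theta_n)|(\Delta-1)^{\gamma_m-1}$. This number is not bounded by any exponential in $\gamma_m$ uniformly in $n$, yet your displayed sum silently replaces it by $(B^{2/\lambda}\Delta)^{\gamma_m}$. Because the alphabet is fixed, each such $P$ spells a given word read in $\Theta_1$ with probability bounded below by a positive constant independent of $n$. So the expected number of bad events with $m=1$ tends to infinity with $n$. This is not just slack in the union bound: $\Theta_n$ offers order $g_n$ essentially disjoint chances to spell one of the finitely many forbidden words of length $\gamma_1$. One should therefore expect a uniformly random labeling of $\Theta_n$ to fail with probability tending to $1$.

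The missing idea is a \emph{local} argument, in which bad events are charged to individual edges rather than counted over the whole graph. Two such arguments are the Lov\'asz Local Lemma, as in Osajda's proof, and the Esperet--Giocanti counting argument that the paper invokes. In either, a newly labeled edge can only complete a bad event witnessed by a path through that edge. There are at most $\gamma_m\Delta^{\gamma_m}$ such paths of length $\gamma_m$, each compared against at most $p_m\le C^{\gamma_m}$ paths of $\Theta_m$. For $m=n$ the comparison is against at most $p_n\le C^{\gamma_n}$ paths of $\Theta_n$, which is exactly where $|E(\Theta_n)|\le B^{g_n}$ enters. These per-edge counts are exponential in $\gamma_m$ with no dependence on $|E(\Theta_n)|$. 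Together with $1<\gamma_1<\gamma_2<\cdots$, they give a convergent series that a large but fixed alphabet makes small. Your path count and your reduction of the cross-graph condition to words of length $\gamma_m$ survive in that framework; the global first-moment step does not.
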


\begin{proof}
Let $p_n$ be the number of nonbacktracking directed paths of length $\gamma_n$ in $\Theta_n$.
Choosing the first directed edge and then the remaining $\gamma_n-1$ directed edges gives
\[
        p_n
        \leq 2|E(\Theta_n)|(\Delta-1)^{\gamma_n-1}
        \leq 2B^{g_n}\Delta^{\gamma_n}.
\]
Since $\gamma_n=\lfloor\lambda g_n\rfloor$ and $\gamma_n>1$, we have $g_n/\gamma_n \leq 2/\lambda$, and hence
\[
        p_n
        \leq 2B^{g_n}\Delta^{\gamma_n}
        \leq \bigl(2B^{2/\lambda}\Delta\bigr)^{\gamma_n}.
\]
Thus $p_n\leq C^{\gamma_n}$ for every $n$, where
$C\coloneqq 2B^{2/\lambda}\Delta$.

The conclusion now follows from the counting proof of
\cite[Theorem~3.1]{EsperetGiocanti}, with the substitution described in \cite[Section~4]{EsperetGiocanti}.
In their proof, the diameter-to-girth hypothesis enters through the estimate for the number of possible comparison paths; once that estimate is replaced, the remaining counting argument is unchanged.
Explicitly, in the proof of \cite[Theorem~3.1]{EsperetGiocanti}, replace the bound, coming from their equation~(6), on the number of choices for the path $Q$, by $p_i\leq C^{\gamma_i}$, and take $\alpha=2(\Delta-1)C$.
Then the estimate in their equation~(7) remains valid.  
Taking, as in their proof, an alphabet $S$ with $|S|$ even and $|S|\geq 2(\Delta-1)+13\alpha$, the remainder of their argument is unchanged, and produces the desired labeling of $\bigsqcup_{n=1}^\infty \Theta_n$ by $S$.
\end{proof}

\section{The small-cancellation realization}\label{sec:realization}

\subsection{Isometric embedding theorem}

We record the embedding theorem used below.
Let $0<\lambda\leq 1/6$, let
$\Theta=\bigsqcup_{n=1}^\infty\Theta_n$ be a disjoint union of finite connected
graphs, let $S=S_0\sqcup S_0^{-1}$ be a finite alphabet with fixed-point-free
involution $s\mapsto s^{-1}$, and let $\ell$ be a reduced labeling of $\Theta$
by $S$ satisfying the $C'(\lambda)$ small-cancellation property of
Definition~\ref{def:small-cancellation-labeling}.

A \emph{rooted directed simple cycle} in $\Theta$ is a simple cycle together
with a choice of initial vertex and direction.
Define the associated graphical small-cancellation group by the presentation
\[
        G(\Theta,\ell)
        \coloneqq
        \left\langle S_0\ \middle|\
        \ell(C),\ C\text{ is a rooted directed simple cycle in }\Theta
        \right\rangle .
\]
We write $S$ also for its image in $G(\Theta,\ell)$.
For each $n$, choose a base vertex $o_n\in V(\Theta_n)$.  
The labeling defines a
natural map
\[
        \iota_n:\Theta_n\to\Cay(G(\Theta,\ell),S)
\]
sending $o_n$ to the identity: if $P$ is a directed path from $o_n$ to
$v\in V(\Theta_n)$, then $\iota_n(v)$ is the element represented by $\ell(P)$.
The defining relators make this independent of the choice of $P$.

The following theorem is attributed to Gromov \cite{Gromov}. 
A detailed proof was given by Ollivier \cite[Theorem~1]{Ollivier}. 
We use a formulation given by Gruber \cite[Theorem~5.10]{Gruber}.

\begin{theorem}[Graphical small-cancellation embedding theorem]
\label{thm:graphical-small-cancellation-embedding}
With the notation and assumptions above, the map
\[        \iota_n:\Theta_n\to\Cay(G(\Theta,\ell),S)
\]
is an isometric embedding for every $n$.
\end{theorem}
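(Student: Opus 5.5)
The theorem is quoted from the literature (Gromov, Ollivier, Gruber), so the plan is mainly to check that our hypotheses match those of \cite[Theorem~5.10]{Gruber}. Then we sketch why the conclusion holds, in the style of the standard van Kampen diagram argument.

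\textbf{Step 1: matching hypotheses.} Gruber's $C'(\lambda)$ condition for a labeled graph requires two things.
\begin{enumerate}[label=(\roman*)]
\item The labeling is reduced.
\item Every \emph{piece}, meaning a labeled path occurring at two essentially distinct places, has length less than $\lambda$ times the girth of every component containing it.
\end{enumerate}
Definition~\ref{def:small-cancellation-labeling} gives exactly this. Any word of length at least $\lambda\girth(\Theta_n)$ read along a nonbacktracking path in $\Theta_n$ is read along no other nonbacktracking path. So every piece in $\Theta_n$ has length below $\lambda\girth(\Theta_n)$. One point needs care: Gruber's notion of ``essentially distinct'' identifies paths that differ by a label-preserving automorphism of the component. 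Our condition is stronger, since it forbids any second occurrence at all, so it implies his. The restriction $\lambda\le1/6$ is his standing assumption. With this checked, his theorem gives that each $\iota_n$ is an isometric embedding.

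\textbf{Step 2: the underlying argument.} Two things must be shown.
\begin{itemize}
\item \emph{Well-definedness.} Two paths from $o_n$ to $v$ differ by a closed walk. A closed walk's label is a product of conjugates of cycle labels, so it is trivial in $G(\Theta,\ell)$.
\item \emph{Distance preservation.} The inequality $d(\iota_n u,\iota_n v)\le d_{\Theta_n}(u,v)$ is immediate, because $\iota_n$ sends edges to edges. For the reverse inequality, suppose a word $w$ is shorter than a geodesic $\gamma$ from $u$ to $v$ in $\Theta_n$ but represents the same element. Take a reduced diskular van Kampen diagram $D$ for $\ell(\gamma)w^{-1}$, with faces grouped into maximal subdiagrams labeled by $\Theta$ (Gruber's ``$\Theta$-faces''). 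Under $C'(1/6)$, interior edges between distinct $\Theta$-faces are pieces, so each $\Theta$-face has less than $1/6$ of its boundary interior. Greendlinger's lemma then supplies a $\Theta$-face with more than half its boundary on $\partial D$. If that boundary portion lies mostly on the $\gamma$ side, it is a subpath of a cycle in $\Theta_n$ of length more than half the cycle. Because the labeling is reduced and pieces are short, it must coincide with the corresponding arc of that same cycle in $\Theta_n$. Replacing it by the complementary shorter arc contradicts $\gamma$ being a geodesic. Otherwise the portion lies mostly on the $w$ side, and the face lets us shorten $w$. Inducting on the number of faces gives the result.
\end{itemize}

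\textbf{Main obstacle.} The delicate point is the combinatorial Greendlinger-type lemma for graphical diagrams. Since we cite it, the real work is the hypothesis check in Step 1, together with recording the convention that relators range over all rooted directed simple cycles. This yields the same group as taking all closed paths, because the cycle space is generated by simple cycles.
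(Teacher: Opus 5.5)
This is essentially the paper's route: the paper gives no argument of its own, attributing the theorem to Gromov and Ollivier and invoking Gruber's Theorem~5.10. Your Step~1 check that the condition in Definition~\ref{def:small-cancellation-labeling} (no second occurrence at all of a long nonbacktracking word) implies Gruber's graphical $C'(\lambda)$ condition, together with your observation that simple-cycle relators give the same group as all closed paths, is exactly what that citation requires; that observation is better justified by lollipops on simple cycles generating $\pi_1$ of the graph than by the cycle space.

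Your Step~2 sketch is not load-bearing and should stay labeled as heuristic, because as written it would not stand alone. First, $C'(1/6)$ makes each individual interior arc of a $\Theta$-face short, not its total interior boundary: an interior face has all of its boundary interior. Second, the long exterior arc produced by Greendlinger's lemma may straddle an endpoint of $\gamma$, and your ``mostly on the $\gamma$ side'' dichotomy does not handle that case.
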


\subsection{Proof of Theorem \ref{thm:main-coarse}}

\begin{proof}[Proof of Theorem~\ref{thm:main-coarse}]
Fix a sequence $(\Lambda_i)_{i=1}^\infty$ satisfying the hypothesis of Theorem~\ref{thm:main-coarse}, and fix $0<\lambda\leq 1/6$.
Choose the sequence $(X_j)_{j=1}^\infty$ of $3$-regular graphs from Corollary~\ref{cor:choose-diagonal-graphs} using this $\lambda$ and let $X\coloneqq\bigsqcup_{j=1}^\infty X_j$.
Thus, for every $1\leq i\leq j$, the graph $X_j$ admits no $(j,j)$-regular map into $\Lambda_i$.
By Lemma~\ref{lem:labeling}, there is a finite alphabet $S=S_0\sqcup S_0^{-1}$ with a fixed-point-free involution $s\mapsto s^{-1}$, and a reduced labeling $\ell$ of $X$ by $S$ satisfying the $C'(\lambda)$ small-cancellation property.

Let $K\coloneqq G(X,\ell)$ be the associated graphical small-cancellation group, which is finitely generated by $S_0$.
By Theorem~\ref{thm:graphical-small-cancellation-embedding}, the natural maps
\[
        \iota_j:X_j\to\Cay(K,S)
\]
are isometric embeddings.
It remains to show that the group $K$ does not coarsely embed into any of the graphs $\Lambda_i$.

Fix $i\geq 1$, and suppose toward a contradiction that there is a coarse embedding $f:\Cay(K,S)\to\Lambda_i$.
By Lemma~\ref{lem:coarse-to-regular}, since $\Cay(K,S)$ has bounded degree,
$f$ is $(L,M)$-regular for some integers $L,M\geq 1$.
Choose $j\geq \max\{i,L,M\}$.
Since $\iota_j$ is an isometric embedding, the composition $f\circ\iota_j:X_j\to \Lambda_i$ is $(L,M)$-regular, and hence $(j,j)$-regular.
This contradicts the choice of $X_j$.
\end{proof}

\section{Consequences}\label{sec:consequences}

\subsection{Coarse embeddings into locally compact second countable groups}

Recall that a locally compact second countable group is equipped with a canonical coarse structure, coming from any proper left-invariant compatible metric; see \cite{CornulierDeLaHarpe} for a comprehensive background on the coarse geometry of locally compact groups.

\begin{proposition}[Closed subgroup embeddings are coarse]\label{prop:closed-image-coarse}
Let $H$ and $G$ be locally compact second countable groups, and let
$\iota:H\to G$ be an injective continuous homomorphism with closed image.
Then $\iota$ is a coarse embedding.
In particular, every injective homomorphism between countable discrete groups is a coarse embedding.
\end{proposition}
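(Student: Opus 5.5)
We have to prove Proposition~\ref{prop:closed-image-coarse}. Fix proper left-invariant compatible metrics $d_H$ on $H$ and $d_G$ on $G$; these exist by Struble's theorem. Because $\iota$ is a homomorphism and both metrics are left-invariant, we have $d_G(\iota(x),\iota(y))=d_G(1,\iota(x^{-1}y))$ and $d_H(x,y)=d_H(1,x^{-1}y)$. It therefore suffices to define
\[
\rho_+(t)\coloneqq\sup\{d_G(1,\iota(h)) : d_H(1,h)\leq t\},\qquad
\rho_-(t)\coloneqq\inf\{d_G(1,\iota(h)) : d_H(1,h)\geq t\}.
\]
Then $\rho_-(d_H(x,y))\leq d_G(\iota x,\iota y)\leq\rho_+(d_H(x,y))$ holds by construction, and both functions are nondecreasing. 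Two things remain to check: $\rho_+(t)<\infty$ for every $t$, and $\rho_-(t)\to\infty$.

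\emph{Finiteness of $\rho_+$.} Since $d_H$ is proper, the closed ball $\bar B_H(1,t)$ is compact. The map $\iota$ is continuous, so $\iota(\bar B_H(1,t))$ is compact in $G$. A compact set has bounded $d_G$-distance from $1$, so $\rho_+(t)<\infty$.

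\emph{Divergence of $\rho_-$.} This is the main step, and it is where closedness of the image is used. Suppose $\rho_-$ were bounded by some $D$. Then there would be $h_n\in H$ with $d_H(1,h_n)\to\infty$ and $d_G(1,\iota(h_n))\leq D+1$. By properness of $d_G$, the set $K=\bar B_G(1,D+1)$ is compact. The subgroup $\iota(H)$ is closed in $G$, hence locally compact and second countable. The map $\iota:H\to\iota(H)$ is then a continuous bijective homomorphism between locally compact second countable (so $\sigma$-compact) groups. By the open mapping theorem for such groups, it is a homeomorphism. Consequently $\iota^{-1}(K\cap\iota(H))$ is compact in $H$, hence $d_H$-bounded. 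This contradicts $d_H(1,h_n)\to\infty$. Therefore $\rho_-(t)\to\infty$.

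\emph{Discrete case.} For countable discrete groups, every homomorphism is continuous and every subgroup is closed. The "in particular" statement is thus the special case above, with the proper metrics taken to be word-type metrics, or any proper left-invariant metric.

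The main obstacle is the properness step: it needs the open mapping theorem. Without closedness of the image this step genuinely fails; for example, a dense embedding of $\mathbb{Z}$ into the circle $\mathbb{T}$ is not coarse.
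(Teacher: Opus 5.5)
Your proof is correct and is essentially the paper's: both rest on the open mapping theorem (the paper cites Hofmann--Morris) to make $\iota$ a homeomorphism onto the closed subgroup $\iota(H)$. Where the paper then pulls $d_G$ back to a proper compatible left-invariant metric on $H$ and cites Haagerup--Przybyszewska for uniqueness of the coarse structure, you carry out the corresponding compactness estimates by hand.

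One cosmetic point needs a fix. If $H$ is bounded (e.g.\ compact), your $\rho_-(t)$ is an infimum over the empty set for large $t$, so it takes the value $+\infty$. Replace it by $\min\{\rho_-(t),t\}$ so that it is $[0,\infty)$-valued, as the definition of coarse embedding requires.
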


\begin{proof}
By \cite{HofmannMorris}, the homomorphism $\iota$ is a topological isomorphism onto the closed subgroup $\iota(H)$.
A proper left-invariant compatible metric on $G$ then pulls back through $\iota$ to such a metric on $H$ which,
by \cite{HaagerupPrzybyszewska}, induces the canonical coarse structure on $H$.
Thus, $\iota$ is a coarse embedding.
\end{proof}

\begin{lemma}[Compactly generated exhaustion]\label{lem:compactly-generated-exhaustion}
Let $G$ be a locally compact second countable group equipped with a proper left-invariant compatible metric $d$.
For $n\geq 1$, put
\[
        Q_n\coloneqq\overline{B_d(1,n)}
        \qquad\text{and}\qquad
        G_n\coloneqq\langle Q_n\rangle.
\]
Then the following hold:
\begin{enumerate}[label=(\alph*)]
\item
$(G_n)_{n=1}^\infty$ is an increasing sequence of open compactly generated subgroups whose union is $G$.

\item
Let $d_{Q_n}^{\mathrm{word}}$ denote the word metric on $G_n$ associated to the compact generating set $Q_n$.
If a connected graph $X$ coarsely embeds into $(G,d)$, then $X$ coarsely embeds into $(G_n,d_{Q_n}^{\mathrm{word}})$ for some $n$.
\end{enumerate}
\end{lemma}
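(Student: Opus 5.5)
For part (a), I would first note that $Q_n$ is compact, because $d$ is proper and closed balls are compact. Next, $Q_n$ contains the open ball $B_d(1,n)$, which is a neighborhood of the identity since $d$ is compatible. A subgroup containing a neighborhood of the identity is open, so $G_n$ is an open subgroup, generated by the compact set $Q_n$. Monotonicity follows from $Q_n\subseteq Q_{n+1}$. For exhaustion, every $g\in G$ satisfies $d(1,g)<n$ for some $n$, so $g\in Q_n\subseteq G_n$.

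For part (b), let $f:V(X)\to G$ be a coarse embedding with control functions $\rho_\pm$. Put $n\coloneqq\max\{1,\lceil\rho_+(1)\rceil+1\}$. The plan has four steps.

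\emph{Step 1 (translate into $G_n$).} Replace $f$ by $f'\coloneqq f(x_0)^{-1}f$ for a fixed vertex $x_0$. This is again a coarse embedding with the same $\rho_\pm$, by left-invariance of $d$, and $f'(x_0)=1$.

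\emph{Step 2 (image lies in $G_n$).} For adjacent vertices $x,y$, left-invariance gives $d(1,f'(x)^{-1}f'(y))\leq\rho_+(1)<n$, so $f'(x)^{-1}f'(y)\in Q_n$. Since $X$ is connected, we can walk along a path from $x_0$ to any vertex $x$. Writing $f'(x)$ as the product of these increments shows $f'(x)\in G_n$.

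\emph{Step 3 (upper control in the word metric).} The same computation gives $d^{\mathrm{word}}_{Q_n}(f'(x),f'(y))\leq 1$ for adjacent $x,y$. Hence $f'$ is $1$-Lipschitz into $(G_n,d^{\mathrm{word}}_{Q_n})$.

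\emph{Step 4 (lower control).} Here I would compare metrics. For $g\in G_n$, the triangle inequality and left-invariance give $d(1,g)\leq n\,d^{\mathrm{word}}_{Q_n}(1,g)$, because each generator from $Q_n$ moves $d$-distance at most $n$. By left-invariance of both metrics, $d\leq n\,d^{\mathrm{word}}_{Q_n}$ on $G_n$. Therefore
\[
\rho_-(d_X(x,y))\leq d(f'(x),f'(y))\leq n\,d^{\mathrm{word}}_{Q_n}(f'(x),f'(y)),
\]
so $t\mapsto\rho_-(t)/n$ is a valid lower control function, and it still tends to $\infty$.

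The only mildly delicate point is ensuring that this inequality between $d$ and the word metric goes in the needed direction. It does, so no deeper fact such as the Milnor--Schwarz lemma or \cite{HaagerupPrzybyszewska} is required.
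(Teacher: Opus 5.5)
Your proof is correct and follows the paper's argument essentially step for step: translate so that $f(x_0)=1$, choose $n>\rho_+(1)$ so that increments along edges lie in $Q_n$, deduce $f(V(X))\subseteq G_n$ and the $1$-Lipschitz bound, and then obtain lower control from $\rho_-$. Your explicit comparison $d\le n\,d^{\mathrm{word}}_{Q_n}$ is a quantitative form of the paper's observation that $M_r=\sup\{d(1,g):g\in Q_n^r\}<\infty$ (indeed $M_r\le nr$), and it yields the lower control function $\rho_-/n$ directly instead of through the radii $R_r$.
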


Here $d_{Q_n}^{\mathrm{word}}$ denotes the word metric on $G_n$ associated to $Q_n$, rather than a metric inherited from $G$.
Since $Q_n$ is compact and generates $G_n$, this metric is coarsely equivalent to every proper left-invariant compatible metric on $G_n$.
We use the word metric because it is the metric naturally modeled by the graphs used in the proof of Corollary~\ref{cor:lcsc} below.

\begin{proof}
Each $Q_n$ is a compact symmetric neighborhood of the identity.
Thus $G_n$ is open and compactly generated.
The sequence is increasing, and its union is $G$ because every element of $G$ lies in some $Q_n$.
This proves (a).

For (b), let $f:X\to G$ be a coarse embedding with associated maps $\rho_-$ and $\rho_+$, and fix $x_0\in V(X)$.
After composing with a left translation, assume that $f(x_0)=1$.
Choose an integer $n>\rho_+(1)$.
If $x$ and $y$ are adjacent, then $f(x)^{-1}f(y)\in Q_n$.
Since $X$ is connected, $f(V(X))\subseteq G_n$, and $f:X\to(G_n,d_{Q_n}^{\mathrm{word}})$ is $1$-Lipschitz.

For every integer $r\geq 0$, put
\[
        M_r\coloneqq\sup\{d(1,g):g\in Q_n^r\}<\infty.
\]
Suppose $d_{Q_n}^{\mathrm{word}}(f(x),f(y))\leq r$. 
Then $f(x)^{-1}f(y)\in Q_n^r$, and hence
\[
        \rho_-(d_X(x,y))
        \leq d(f(x),f(y))
        \leq M_r.
\]
Since $\rho_-(t)\to\infty$, there is $R_r<\infty$ such that $\rho_-(R_r)>M_r$, and hence $d_X(x,y)<R_r$.
Together with the fact that $f:X\to(G_n,d_{Q_n}^{\mathrm{word}})$ is $1$-Lipschitz, this shows that $f$ is a coarse embedding.
\end{proof}

\subsection{Proof of Corollaries~\ref{cor:intro-consequences} and~\ref{cor:lcsc}} We now prove the corollaries from the introduction.

\begin{proof}[Proof of Corollary~\ref{cor:lcsc}]
For each $i$, fix a proper left-invariant compatible metric $d_i$ on $G_i$.
Apply Lemma~\ref{lem:compactly-generated-exhaustion} to $(G_i,d_i)$, and write
$(G_{i,n})_{n=1}^\infty$ and $(Q_{i,n})_{n=1}^\infty$ for the resulting subgroups and compact generating sets.
By \cite[Theorem~3.2]{Salmi}, for each pair $(i,n)$ there is a connected bounded-degree graph $\Lambda_{i,n}$ on which $G_{i,n}$ admits a proper cobounded quasi-action.
In particular, $\Lambda_{i,n}$ admits a finitely cobounded coarse quasi-action.
By \cite[Lemma~2.2]{Salmi}, the graph $\Lambda_{i,n}$ is quasi-isometric to $(G_{i,n},d_{Q_{i,n}}^{\mathrm{word}})$.

Apply Theorem~\ref{thm:main-coarse} to the countable family $(\Lambda_{i,n})_{i,n\geq 1}$,
and let $K$ be the resulting finitely generated group.
Suppose toward a contradiction that, for some $i$, the group $K$ coarsely embeds into $(G_i,d_i)$.
Then Lemma~\ref{lem:compactly-generated-exhaustion}(b) would give a coarse embedding of $K$
into $(G_{i,n},d_{Q_{i,n}}^{\mathrm{word}})$ for some $n$.
Composing with a quasi-isometry $(G_{i,n},d_{Q_{i,n}}^{\mathrm{word}})\to\Lambda_{i,n}$ would contradict the choice of $K$.
Thus, for every $i$, the group $K$ does not coarsely embed into $G_i$.
\end{proof}

\begin{proof}[Proof of Corollary~\ref{cor:intro-consequences}]
Since two finitely generated groups with isomorphic Cayley graphs are quasi-isometric, it is enough to prove that there is no universal quasi-isometry class.
Suppose toward a contradiction that $\mathcal Q$ were a universal quasi-isometry class, and choose a finitely generated group $G\in\mathcal Q$.
Fix a finite generating set $S$ of $G$, and let $\Lambda\coloneqq\Cay(G,S)$.
Since $\Lambda$ is bounded degree and vertex-transitive, the constant sequence $\Lambda_i=\Lambda$ satisfies the hypotheses of Theorem~\ref{thm:main-coarse}.
Thus there is a finitely generated group $K$ which does not coarsely embed into $\Lambda$.
By universality of $\mathcal Q$, the group $K$ embeds as a subgroup of a finitely generated group $H\in\mathcal Q$.
By Proposition~\ref{prop:closed-image-coarse}, the inclusion $K\to H$ is a coarse embedding, and composing with a quasi-isometry $H\to G$ gives a coarse embedding of $K$ into $\Lambda$, a contradiction.
\end{proof}

\subsection{Continuum many witnesses}

The following is standard, but we could not find a reference, so we include the proof.

\begin{proposition}[Continuum many quasi-isometry classes of overgroups]\label{prop:free-product-amplification}
Let $K$ be a finitely generated group.
There exists a continuum-sized family $(H_i)_{i\in I}$ of pairwise non-quasi-isometric finitely generated groups,
each containing $K$ as a subgroup.

In particular, if a metric space $Y$ does not coarsely contain $K$,
then $Y$ does not coarsely contain any member of the family $(H_i)_{i\in I}$.
\end{proposition}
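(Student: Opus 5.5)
Take $H_i\coloneqq K * \Gamma_i$, where $(\Gamma_i)_{i\in I}$ is a continuum-sized family of pairwise non-quasi-isometric finitely generated groups. Concretely, use Grigorchuk's groups of intermediate growth $G_\omega$, which realize continuum many pairwise distinct growth types and so are pairwise non-quasi-isometric. A more robust alternative, better suited to the free-product argument below, is Bowditch's continuum of pairwise non-quasi-isometric small-cancellation groups, which are one-ended and finitely presented.

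Each $H_i$ is finitely generated and contains $K$ as a free factor, hence as a subgroup. The "in particular" clause then follows at once: by Proposition~\ref{prop:closed-image-coarse}, the inclusion $K\to H_i$ is a coarse embedding. So if $Y$ coarsely contained $H_i$, composing would give a coarse embedding of $K$ into $Y$, which is a contradiction.

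The main step is showing that $H_i$ and $H_{i'}$ are not quasi-isometric for $i\neq i'$. Here I would use the Papasoglu--Whyte classification of free products up to quasi-isometry. A quasi-isometry between infinitely-ended groups coarsely respects the Stallings--Dunwoody decomposition. In particular, the set of quasi-isometry types of one-ended vertex groups, i.e.\ one-ended factors in a free decomposition, is a quasi-isometry invariant provided the groups are accessible.

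The obstacle is that $K$ is an arbitrary finitely generated group and may be inaccessible. To sidestep this, I would try an argument that depends only on the one-ended factors $\Gamma_i$ and not on the structure of $K$. A quasi-isometry $\phi\colon H_i\to H_{i'}$ sends a coset $g\Gamma_i$, which is a one-ended coarsely embedded subspace, to a subset that cannot be coarsely separated by bounded sets. By the Bass--Serre tree of $K*\Gamma_{i'}$, such a subset lies within bounded distance of a single vertex coset, either $h\Gamma_{i'}$ or $hK$.

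So $\Gamma_i$ coarsely embeds into $\Gamma_{i'}$ or into $K$. Only countably many indices can be problematic in the second case, and one must also rule out $\Gamma_i$ being a proper coarse piece of $\Gamma_{i'}$. To avoid both issues, I would choose the family differently: first choose any continuum family $(\Gamma_i)$ with pairwise distinct invariants that pass to free products with $K$, for example growth or asymptotic-dimension-type invariants. If that fails, I would pass to a continuum subfamily, using that a single $H$ is quasi-isometric to at most countably many $H_{i'}$ when there are countably many possibilities for where the pieces land.

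The cleanest fallback is the following cardinality argument. Quasi-isometry classes among the continuum-sized family $\{K*\Gamma_i\}$ can be shown to be countable by the coset-localization argument above, since each class corresponds to countably many $\Gamma$'s up to quasi-isometry. Choosing one representative per class then leaves continuum many pairwise non-quasi-isometric groups.
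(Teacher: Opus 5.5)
Your derivation of the ``in particular'' clause is fine. You also correctly identify the difficulty: $K$ need not be one-ended or accessible, so Papasoglu--Whyte does not apply to $K*\Gamma_i$. However, none of your workarounds closes this gap.

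The coset-localization step only shows that $\Gamma_i$ quasi-isometrically embeds into $\Gamma_{i'}$ or into $K$. The second alternative gives no counting information. Countability of $K$ bounds its subgroups, not its coarsely embedded subsets, and a fixed finitely generated group can coarsely contain many pairwise non-quasi-isometric groups. So your claims that only countably many indices are problematic, and that each quasi-isometry class in $\{K*\Gamma_i\}$ meets only countably many indices, are asserted rather than proved.

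The danger is real, because $K$ can absorb one-ended factors. If $K=\Gamma_1*\Gamma_2$, then $K*\Gamma_1$ and $K*\Gamma_2$ have the same set $\{[\Gamma_1],[\Gamma_2]\}$ of quasi-isometry types of one-ended factors, so they are quasi-isometric by Papasoglu--Whyte. For inaccessible $K$ you have no control over how much can be absorbed.

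The invariants you suggest do not help either. $K*\Gamma_i$ has exponential growth whenever $K\neq 1$, and asymptotic dimension takes only countably many values. Also, Bowditch's continuum consists of infinitely presented groups. There are only countably many finitely presented groups, so no continuum of pairwise non-quasi-isometric finitely presented groups exists.

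The missing idea is to make the $K$-side factor one-ended before forming the free product. The paper takes $H_i=(K\times\mathbb{Z}^2)*B_i$, with $B_i$ Grigorchuk's groups.
\begin{itemize}
\item The group $K\times\mathbb{Z}^2$ is finitely generated, contains $K$, and is one-ended for every finitely generated $K$, finite or infinite.
\item Groups of intermediate growth are one-ended.
\item Hence $H_i$ is a free product of two one-ended groups. This decomposition is already terminal, so no accessibility hypothesis on $K$ is needed.
\item Papasoglu--Whyte then shows that the quasi-isometry type of $H_i$ determines the set $\{[K\times\mathbb{Z}^2],[B_i]\}$.
\item These sets are pairwise distinct because the $B_i$ are pairwise non-quasi-isometric. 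If some $B_{i_0}$ is quasi-isometric to $K\times\mathbb{Z}^2$, its set is a singleton and still differs from all the others.
\end{itemize}
With this modification your own localization argument should also go through. Every vertex coset is then one-ended, so it can be matched, using a quasi-isometry and its quasi-inverse, with a single vertex coset on the other side.
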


\begin{proof}
The product $K\times\mathbb{Z}^2$ is finitely generated and one-ended.
Let $(B_i)_{i\in I}$ be a continuum-sized family of pairwise non-quasi-isometric finitely generated groups that are each one-ended;
for example, this is provided by Grigorchuk's construction \cite{GrigorchukGrowth} of continuum many finitely generated groups of intermediate growth with pairwise distinct growth types.
By Papasoglu--Whyte \cite[Theorem~0.4]{PapasogluWhyte}, the free products $H_i\coloneqq (K\times\mathbb{Z}^2) * B_i$, for $i\in I$, are pairwise non-quasi-isometric, and they each contain $K$.
The final assertion follows because each inclusion $K\hookrightarrow H_i$ is a coarse embedding by Proposition~\ref{prop:closed-image-coarse}.
\end{proof}

Applying Proposition~\ref{prop:free-product-amplification} to the witnesses from Theorem~\ref{thm:main-coarse} and Corollary~\ref{cor:lcsc},
we obtain $2^{\aleph_0}$ pairwise non-quasi-isometric witnesses in each setting.

\end{document}